\providecommand{\U}[1]{\protect\rule{.1in}{.1in}}
\newtheorem{theorem}{Theorem}
\newtheorem{corollary}[theorem]{Corollary}
\newtheorem{lemma}[theorem]{Lemma}
\newtheorem{notation}[theorem]{Notation}
\newtheorem{proposition}[theorem]{Proposition}
\newtheorem{remark}[theorem]{Remark}
\newenvironment{proof}[1][Proof]{\textbf{#1.} }{\ \rule{0.5em}{0.5em}}
\begin{document}

\title{Axiomatic Differential Geometry II-1\\-Its Developments-\\Chapter 1:Vector Fields}
\author{Hirokazu Nishimura\\Institute of Mathematics\\University of Tsukuba\\Tsukuba, Ibaraki, 305-8571, JAPAN}
\maketitle

\begin{abstract}
In our previous paper entitled ''Axiomatic differential geometry -towards
model categories of differential geometry-, we have given a category-theoretic
framework of differential geometry. As the first part of our series of papers
concerned with differential-geometric developments within the above axiomatic
scheme, this paper is devoted to vector fields. The principal result is that
the totality of vector fields on a microlinear and Weil exponential object
forms a Lie algebra.

\end{abstract}

\section{Introduction\label{s1}}

In \cite{nishi2} we have given a skeleton of our axiomatic approach to
differential geometry. This paper, concerned with vector fields, is the first
part of our theoretical developments within the axiomatic framework.
Subsequent papers deal with differential forms, the Fr\"{o}licher-Nijenhuis
calculus, jet bundles, connections and so on.

In Section \ref{s2}, we develop a convenient system of locutions in speaking
of Weil algebras. Since we are no longer allowed to speak elementwise in our
general context, we have to learn how to express ''the tangent space is a
linear space'', to say nothing of how to prove it, which will be done in
Section \ref{s3}. The principal result of Section \ref{s4} is that the
totality of vector fields forms a Lie algebra.

\section{Preliminaries\label{s2}}

\subsection{Weil Algebras and Infinitesimal Objects}

Let $k$ be a commutative ring. As in our previous paper, we denote by
$\mathbf{Weil}_{k}$ the category of Weil $k$-algebras. Roughly speaking, each
Weil $k$-algebra corresponds to an infinitesimal object in the shade. By way
of example, the Weil algebra $k[X]/(X^{2})$ (=the quotient ring of the
polynomial ring $k[X]$\ of an indeterminate $X$\ over $k$ modulo the ideal
$(X^{2})$\ generated by $X^{2}$) corresponds to the infinitesimal object of
first-order nilpotent infinitesimals, while the Weil algebra $k[X]/(X^{3})$
corresponds to the infinitesimal object of second-order nilpotent
infinitesimals. Although an infinitesimal object is undoubtedly imaginary in
our real world, as has harassed both mathematicians and philosophers of the
17th and the 18th centuries such as extravagantly skeptical philosopher
Berkley (because mathematicians at that time preferred to talk infinitesimal
objects as if they were real entities), each Weil algebra yields its
corresponding \textit{Weil functor} or \textit{Weil prolongation} in our real
world. By way of example, the Weil algebra $k[X]/(X^{2})$ yields the tangent
bundle functor as its corresponding Weil functor. Weil functors are major
players in our axiomatic approach.

\textit{Synthetic differential geometry }(usually abbreviated to SDG), which
is a kind of differential geometry with a cornucopia of nilpotent
infinitesimals, was forced to invent its models, in which nilpotent
infinitesimals were transparently visible. For a standard textbook on SDG, the
reader is referred to \cite{lav}, while he or she is referred to \cite{ko} for
the model theory of SDG. Although we do not get involved in SDG herein, we
will exploit locutions in terms of infinitesimal objects so as to make the
paper highly readable. Thus we prefer to write $\mathcal{W}_{D}$\ and
$\mathcal{W}_{D_{2}}$\ in place of $k[X]/(X^{2})$ and $k[X]/(X^{3})$
respectively, where $D$ stands for the infinitesimal object of first-order
nilpotent infinitesimals, and $D_{2}$\ stands for the infinitesimal object of
second-order nilpotent infinitesimals. More generally, given a natural number
$n$, we denote by $D_{n}$ the infinitesimal object corresponding to the Weil
$k$-algebra $k[X]/(X^{n+1})$. Obviously we have $D_{1}=D$. Even more
generally, given natural numbers $m,n$, we denote by $D(m)_{n}$ the
infinitesimal object corresponding to the Weil algebra $k[X_{1},...,X_{m}]/I$,
where $I$ is the ideal generated by $X_{i_{1}}...X_{i_{n+1}}$'s with
$i_{1},...,i_{n+1}$ being integers such that $1\leq i_{1},...,i_{n+1}\leq m$.
Therefore we have $D(1)_{n}=D_{n}$, while we write $D\left(  m\right)  $ for
$D\left(  m\right)  _{1}$.We will write $\mathcal{W}_{d\in D_{2}\mapsto
d^{2}\in D}$, by way of example, for the homomorphim of Weil algebras
$k[X]/(X^{2})\rightarrow k[X]/(X^{3})$ induced by the homomorphism
$X\rightarrow X^{2}$ of the polynomial ring \ $k[X]$ to itself. Such locutions
are justifiable, because the category $\mathbf{Weil}_{k}$ of Weil $k$-algebras
in the real world and the category of infinitesimal objects in the shade are
dual to each other in a sense. Thus we have a contravariant functor
$\mathcal{W}$\ from the category of infinitesimal objects in the shade to the
category of Weil algebras in the real world, yielding a contravariant
equivalence between the two categories.

\subsection{Assumptions}

We fix a DG-category $\left(  \mathcal{K},\mathbb{R},\mathbf{T},\alpha\right)
$\ in the sense of \cite{nishi2}, while $M$ is a microlinear and Weil
exponentiable object in $\mathcal{K}$.

\section{\label{s3}Tangent Bundles}

\begin{proposition}
\label{t3.1}If $M$ is a microlinear object in $\mathcal{K}$, then we have
\[
M\otimes\mathcal{W}_{D(2)}=(M\otimes\mathcal{W}_{D})\times_{M}(M\otimes
\mathcal{W}_{D})
\]

\end{proposition}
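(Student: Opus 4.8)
The plan is to reduce the statement to a single fact about Weil algebras and then feed that fact into the defining property of microlinearity. First I would record the relevant algebra. Unwinding the definitions of Section~\ref{s2}, $\mathcal{W}_{D(2)}=k[X_{1},X_{2}]/(X_{1}^{2},X_{1}X_{2},X_{2}^{2})$, which is free of rank $3$ over $k$ with basis $1,X_{1},X_{2}$ and with all products of the nilpotent generators vanishing. The claim I would verify by a direct basis computation is that this algebra, together with the two homomorphisms $p_{i}\colon\mathcal{W}_{D(2)}\to\mathcal{W}_{D}$ determined by $X_{i}\mapsto X$ and $X_{j}\mapsto0$ for $j\neq i$, is exactly the fibred product $\mathcal{W}_{D}\times_{k}\mathcal{W}_{D}$ taken along the two augmentations $\varepsilon\colon\mathcal{W}_{D}\to k$. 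Concretely, an element of $\mathcal{W}_{D}\times_{k}\mathcal{W}_{D}$ is a pair $(a_{0}+a_{1}X,\,a_{0}+b_{1}X)$ sharing the same constant term, and the assignment $X_{1}\mapsto(X,0)$, $X_{2}\mapsto(0,X)$ identifies the two rings. Dually, this is the assertion that in the category of infinitesimal objects $D(2)$ is the amalgamated sum of two copies of $D$ glued at the base point.

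With this in hand, the second step is to invoke microlinearity. The diagram $\mathcal{W}_{D}\xrightarrow{\varepsilon}k\xleftarrow{\varepsilon}\mathcal{W}_{D}$ with apex $\mathcal{W}_{D(2)}$ is one of the quasi-colimit diagrams of infinitesimal objects (equivalently, a limit diagram in $\mathbf{Weil}_{k}$) that a microlinear object is, by definition, required to perceive as a limit. Since $M$ is microlinear, applying the covariant Weil-prolongation functor $M\otimes(-)$ turns this diagram into a limit diagram in $\mathcal{K}$, so that $M\otimes\mathcal{W}_{D(2)}$ is the pullback of
\[
M\otimes\mathcal{W}_{D}\longrightarrow M\otimes k\longleftarrow M\otimes\mathcal{W}_{D}.
\]
Because $M\otimes k=M$ and the two legs $M\otimes\varepsilon$ are precisely the canonical projections $M\otimes\mathcal{W}_{D}\to M$, this pullback is exactly $(M\otimes\mathcal{W}_{D})\times_{M}(M\otimes\mathcal{W}_{D})$, which is the desired identity.

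The last thing I would check, and the point where the argument is most delicate, is the bookkeeping of variance and of the structure maps. I must confirm that $M\otimes(-)$ is genuinely covariant in the Weil algebra (so that a limit diagram in $\mathbf{Weil}_{k}$, dual to a colimit of infinitesimal objects, is the correct input to microlinearity), that Weil exponentiability of $M$ guarantees $M\otimes(-)$ is defined on this diagram and realizes the abstract limit as an honest fibred product in $\mathcal{K}$, and that the projection to $M$ appearing on the right-hand side coincides with $M\otimes\varepsilon$ rather than with some other structure map. The genuinely mathematical content sits entirely in the first step, the recognition of $\mathcal{W}_{D(2)}$ as the fibred product $\mathcal{W}_{D}\times_{k}\mathcal{W}_{D}$; once that identification and its status as a defining quasi-colimit diagram are secured, microlinearity does the rest mechanically.
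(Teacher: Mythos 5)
Your proposal is correct and follows essentially the same route as the paper: the paper also identifies $\mathcal{W}_{D(2)}$ as the pullback of the two augmentations $\mathcal{W}_{D}\rightarrow\mathcal{W}_{1}=k$ (its projections $\mathcal{W}_{d\in D\mapsto(d,0)\in D(2)}$ and $\mathcal{W}_{d\in D\mapsto(0,d)\in D(2)}$ are exactly your $p_{1},p_{2}$) and then applies microlinearity of $M$ to transport this pullback along $M\otimes(-)$. The only difference is that you spell out the rank-three basis computation that the paper leaves as an assertion, which is a harmless (and welcome) elaboration.
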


\begin{proof}
We have the following pullback diagram of Weil $k$-algebras:
\begin{equation}%
\begin{array}
[c]{ccc}%
\mathcal{W}_{D(2)} & \rightarrow & \mathcal{W}_{D}\\
\downarrow &  & \downarrow\\
\mathcal{W}_{D} & \rightarrow & \mathcal{W}_{1}%
\end{array}
\label{3.1.1}%
\end{equation}
where the left vertical arrow is
\[
\mathcal{W}_{d\in D\mapsto(d,0)\in D(2)}%
\]
while the upper horizontal arrow is
\[
\mathcal{W}_{d\in D\mapsto(0,d)\in D(2)}%
\]
The above pullback diagram naturally gives rise to the following pullback
diagram because of the microlinearity of $M$:
\[%
\begin{array}
[c]{ccc}%
M\otimes\mathcal{W}_{D(2)} & \rightarrow & M\otimes\mathcal{W}_{D}\\
\downarrow &  & \downarrow\\
M\otimes\mathcal{W}_{D} & \rightarrow & M\otimes\mathcal{W}_{1}=M
\end{array}
\]
This completes the proof.
\end{proof}

\begin{corollary}
The canonical projection $\tau_{\mathcal{W}_{D(2)}}\left(  M\right)
:M\otimes\mathcal{W}_{D(2)}\rightarrow M$ is a product of two copies of
$\tau_{\mathcal{W}_{D}}\left(  M\right)  :M\otimes\mathcal{W}_{D}\rightarrow M
$ in the slice category $\mathcal{K}/M$.
\end{corollary}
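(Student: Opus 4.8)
The plan is to recognize that the asserted statement is merely the slice-category translation of the pullback already obtained in Proposition \ref{t3.1}, so that the conceptual work has essentially been done. First I would invoke the standard categorical fact that, in any category possessing the relevant pullbacks, the product of two objects $f:A\rightarrow M$ and $g:B\rightarrow M$ in the slice category $\mathcal{K}/M$ is precisely the fibre product $A\times_{M}B$ equipped with its canonical map to $M$, the two product projections being the two pullback projections $A\times_{M}B\rightarrow A$ and $A\times_{M}B\rightarrow B$. Thus to prove the corollary it suffices to (i) exhibit $M\otimes\mathcal{W}_{D(2)}$ as the fibre product of two copies of $M\otimes\mathcal{W}_{D}$ over $M$, which is exactly the content of Proposition \ref{t3.1}, and (ii) verify that the structural maps to $M$ are the canonical projections named in the statement.

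Step (ii) is where the only genuine, if modest, verification lies. I would first observe that the two arrows $M\otimes\mathcal{W}_{D}\rightarrow M$ forming the cospan of the pullback in Proposition \ref{t3.1} are \emph{both} equal to $\tau_{\mathcal{W}_{D}}(M)$. This is because, in the pullback diagram \eqref{3.1.1} of Weil algebras, the bottom horizontal and right vertical arrows $\mathcal{W}_{D}\rightarrow\mathcal{W}_{1}$ are both the augmentation of $\mathcal{W}_{D}$ (i.e. the unique $k$-algebra homomorphism annihilating the nilpotent generator), as forced by commutativity of the square; applying the functor $M\otimes(-)$ and using the identification $M\otimes\mathcal{W}_{1}=M$ turns each of them into the canonical projection $\tau_{\mathcal{W}_{D}}(M)$ by the very definition of the latter in \cite{nishi2}. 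Consequently the two pullback projections $M\otimes\mathcal{W}_{D(2)}\rightarrow M\otimes\mathcal{W}_{D}$ are morphisms over $M$, hence morphisms in $\mathcal{K}/M$, and they serve as the two product projections.

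It then remains to identify the structural map of the product object. The canonical map $M\otimes\mathcal{W}_{D(2)}\rightarrow M$ of the fibre product is the common composite of a pullback projection with $\tau_{\mathcal{W}_{D}}(M)$; by functoriality of $M\otimes(-)$ this composite is induced by the composite homomorphism $\mathcal{W}_{D(2)}\rightarrow\mathcal{W}_{D}\rightarrow\mathcal{W}_{1}$, which is nothing but the augmentation of $\mathcal{W}_{D(2)}$, and so the structural map is exactly $\tau_{\mathcal{W}_{D(2)}}(M)$. Assembling (i) and (ii), the object $\tau_{\mathcal{W}_{D(2)}}(M):M\otimes\mathcal{W}_{D(2)}\rightarrow M$ of $\mathcal{K}/M$, together with the two pullback projections, satisfies the universal property of the product of two copies of $\tau_{\mathcal{W}_{D}}(M)$, which is the claim.

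I would expect the main obstacle to be bookkeeping rather than conceptual: one must be scrupulous about \emph{which} maps to $M$ are chosen as the structural morphisms and confirm, via functoriality of the Weil prolongation and the fact that a composite of augmentations is again an augmentation, that these coincide with the canonically designated projections $\tau_{\mathcal{W}_{D}}(M)$ and $\tau_{\mathcal{W}_{D(2)}}(M)$. Once this identification is in place, the corollary follows at once from the equivalence between fibre products over $M$ and products in the slice category $\mathcal{K}/M$.
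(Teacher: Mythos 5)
Your proposal is correct and is precisely the argument the paper intends: the corollary is stated without proof as an immediate consequence of Proposition \ref{t3.1}, via the standard identification of products in $\mathcal{K}/M$ with fibre products over $M$. Your additional bookkeeping in step (ii) — checking that the cospan arrows and the structural map are the augmentation-induced projections $\tau_{\mathcal{W}_{D}}(M)$ and $\tau_{\mathcal{W}_{D(2)}}(M)$ — merely makes explicit what the paper leaves tacit.
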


Now we are in a position to define basic operations on $\tau_{\mathcal{W}_{D}%
}\left(  M\right)  :M\otimes\mathcal{W}_{D}\rightarrow M$ in the slice
category $\mathcal{K}/M$ so as to make it a $k$-module.

\begin{enumerate}
\item The addition is defined by
\[
\mathrm{id}_{M}\otimes\mathcal{W}_{+_{D}}:M\otimes\mathcal{W}_{D(2)}%
\rightarrow M\otimes\mathcal{W}_{D}%
\]
where the fabulous mapping $+_{D}:D\rightarrow D(2)$ is
\[
+_{D}:d\in D\mapsto(d,d)\in D(2)
\]

\item The identity with respect to the above addition is defined by
\[
\mathrm{id}_{M}\otimes\mathcal{W}_{0_{D}}:M=M\otimes\mathcal{W}_{1}\rightarrow
M\otimes\mathcal{W}_{D}%
\]
where the fabulous mapping $0_{D}:D\rightarrow1$ is the unique mapping.

\item The inverse with respect to the above addition is defined by
\[
\mathrm{id}_{M}\otimes\mathcal{W}_{-_{D}}:M\otimes\mathcal{W}_{D}\rightarrow
M\otimes\mathcal{W}_{D}%
\]
where the fabulous mapping $-_{D}:D\rightarrow D$ is
\[
-_{D}:d\in D\mapsto-d\in D
\]

\item The scalar multiplication by a scalar $\xi\in k$ is defined by
\[
\mathrm{id}_{M}\otimes\mathcal{W}_{\xi_{D}}:M\otimes\mathcal{W}_{D}\rightarrow
M\otimes\mathcal{W}_{D}%
\]
where the fabulous mapping $\xi_{D}:D\rightarrow D$ is
\[
\xi_{D}:d\in D\mapsto\xi d\in D
\]

\end{enumerate}

Now we have

\begin{theorem}
\label{t3.2}The canonical projection $\tau_{\mathcal{W}_{D}}\left(  M\right)
:M\otimes\mathcal{W}_{D}\rightarrow M$ is a $k$-module in the slice category
$\mathcal{K}/M$.
\end{theorem}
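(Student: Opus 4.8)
The plan is to verify directly the defining diagrams of a $k$-module object in the slice category $\mathcal{K}/M$: associativity and commutativity of the addition, neutrality of the zero section, the inverse law, the two distributive laws, compatibility of the scalar action with multiplication in $k$, and the action of the unit $1\in k$. What makes this tractable is that every operation introduced above has the uniform form $\mathrm{id}_M\otimes\mathcal{W}_f$ for an explicit fabulous mapping $f$ between infinitesimal objects, so I would recast each axiom as an identity of morphisms of this form and then discharge it by a computation among fabulous mappings alone.

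First I would extend Proposition \ref{t3.1} from the binary to the $n$-ary case, establishing $M\otimes\mathcal{W}_{D(n)}=(M\otimes\mathcal{W}_D)\times_M\cdots\times_M(M\otimes\mathcal{W}_D)$ with $n$ factors. This is needed because the sources of several axioms are iterated fibre products over $M$: associativity lives on the triple product $M\otimes\mathcal{W}_{D(3)}$, while the distributive law $\xi\cdot(a+b)=\xi\cdot a+\xi\cdot b$ involves $M\otimes\mathcal{W}_{D(2)}$ together with $M\otimes\mathcal{W}_{D(3)}$. The proof is the same microlinearity argument: the coprojections $d\in D\mapsto(0,\dots,d,\dots,0)\in D(n)$ assemble into an $n$-ary pullback of Weil algebras, and microlinearity of $M$ turns it into the corresponding fibre-product cone.

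Second comes the reduction. Since $\mathcal{W}$ is contravariant and $M\otimes(-)$ is covariant on Weil algebras, the assignment $f\mapsto\mathrm{id}_M\otimes\mathcal{W}_f$ is a contravariant functor from infinitesimal objects to $\mathcal{K}$; hence composites of operations correspond to composites of fabulous mappings in the opposite order, and pairings into a fibre product are governed by the $n$-ary Proposition \ref{t3.1}. Each module axiom thereby becomes the assertion that two fabulous mappings among $D$, $D(2)$, $D(3)$ coincide. These are checked on nilpotent infinitesimals: commutativity reduces to the fact that $d\mapsto(d,d)$ is fixed by the swap $(d_1,d_2)\mapsto(d_2,d_1)$; associativity to $d\mapsto(d,d)\mapsto(d,d,d)$ along either bracketing; the neutrality and inverse laws to $d\mapsto(d,d)\mapsto d$ and $d\mapsto(d,d)\mapsto d-d=0$; the distributive laws to the evident identities built from $\xi_D:d\mapsto\xi d$ and $+_D$; and $\xi$-$\eta$ compatibility to $(\xi\eta)d=\xi(\eta d)$. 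All of these are immediate.

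The genuine obstacle is the reduction, not the elementwise arithmetic. One must be certain that the Weil functor $M\otimes(-)$ actually sends the pullbacks of infinitesimal objects that encode ``adding two infinitesimals in a shared direction'' to the fibre products over $M$ appearing in the module axioms; this is exactly what microlinearity buys us through the $n$-ary form of Proposition \ref{t3.1} and its corollary. Once the sources of all the diagrams are correctly identified as Weil prolongations, the contravariant equivalence $\mathcal{W}$ transports every axiom to the category of infinitesimal objects, where the verifications coincide with the familiar computations of synthetic differential geometry. I would therefore present the proof as: (i) the $n$-ary extension of Proposition \ref{t3.1}; (ii) the observation that $\mathrm{id}_M\otimes\mathcal{W}_{(-)}$ converts the relevant diagrams of fabulous mappings into the module-axiom diagrams; and (iii) the short checks listed above.
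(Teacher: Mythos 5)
Your proposal is correct and follows essentially the same route as the paper: the paper likewise reduces each module axiom to an identity of fabulous mappings among $D$, $D(2)$, $D(3)$ (e.g.\ associativity via $(d_{1},d_{2})\mapsto(d_{1},d_{1},d_{2})$ and $(d_{1},d_{2})\mapsto(d_{1},d_{2},d_{2})$, commutativity via the swap of $D(2)$, distributivity via $(d_{1},d_{2})\mapsto\xi_{1}d_{1}+\xi_{2}d_{2}$ and $(d_{1},d_{2})\mapsto(\xi d_{1},\xi d_{2})$), relying on microlinearity exactly as you do to identify $M\otimes\mathcal{W}_{D(n)}$ with the iterated fibre product over $M$. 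The only cosmetic difference is that you state the $n$-ary extension of Proposition \ref{t3.1} explicitly, whereas the paper uses it tacitly when interpreting the diagrams involving $M\otimes\mathcal{W}_{D(3)}$.
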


\begin{proof}
\begin{enumerate}
\item The associativity of the addition follows from the following commutative
diagram:
\[%
\begin{array}
[c]{ccccc}
&  & \mathrm{id}_{M}\otimes\mathcal{W}_{\epsilon_{23}} &  & \\
& M\otimes\mathcal{W}_{D(3)} & \rightarrow & M\otimes\mathcal{W}_{D(2)} & \\
\mathrm{id}_{M}\otimes\mathcal{W}_{\epsilon_{12}} & \downarrow &  & \downarrow
& \mathrm{id}_{M}\otimes\mathcal{W}_{+_{D}}\\
& M\otimes\mathcal{W}_{D(2)} & \rightarrow & M\otimes\mathcal{W}_{D} & \\
&  & \mathrm{id}_{M}\otimes\mathcal{W}_{+_{D}} &  &
\end{array}
\]
where the fabulous mapping $\epsilon_{23}:D(2)\rightarrow D(3)$ is
\[
(d_{1},d_{2})\in D(2)\mapsto(d_{1},d_{1},d_{2})\in D(3)
\]
while the fabulous mapping $\epsilon_{12}:D(2)\rightarrow D(3)$ is
\[
(d_{1},d_{2})\in D(2)\mapsto(d_{1},d_{2},d_{2})\in D(3)
\]

\item The commutativity of the addition follows readily from the commutative
diagram
\[%
\begin{array}
[c]{cccc}
& \mathcal{W}_{+_{D}} &  & \\
\mathcal{W}_{D} & \leftarrow & \mathcal{W}_{D(2)} & \\
& \nwarrow & \uparrow & \mathcal{W}_{\tau}\\
\mathcal{W}_{+_{D}} &  & \mathcal{W}_{D(2)} &
\end{array}
\]
where the fabulous mapping $\tau:D(2)\rightarrow D(2)$ is
\[
(d_{1},d_{2})\in D(2)\mapsto(d_{2},d_{1})\in D(2)\text{.}%
\]

\item To see that the identity defined above really plays the identity with
respect to the above addition, it suffices to note that the composition of the
following two fabulous mappings
\[
d\in D\mapsto(d,0)\in D(2)
\]
\[
(d_{1},d_{2})\in D(2)\mapsto d_{1}\in D
\]
in order is the identity mapping of $D$, while the composition of the
following two fabulous mappings
\[
d\in D\mapsto(0,d)\in D(2)
\]
\[
(d_{1},d_{2})\in D(2)\mapsto d_{1}\in D
\]
in order is the constant mapping
\[
d\in D\mapsto0\in D
\]

\item To see that the addition of scalars distributes with respect to the
scalar multiplication, it suffices to note that, for any $\xi_{1},\xi_{2}\in
k$, the composition of the following two fabulous mappings
\[
d\in D\mapsto(d,0)\in D(2)
\]
\[
(d_{1},d_{2})\in D(2)\mapsto\xi_{1}d_{1}+\xi_{2}d_{2}\in D
\]
in order is the mapping
\[
d\in D\mapsto\xi_{1}d_{1}\in D
\]
and the composition of the two fabulous mappings
\[
d\in D\mapsto(0,d)\in D(2)
\]
\[
(d_{1},d_{2})\in D(2)\mapsto\xi_{1}d_{1}+\xi_{2}d_{2}\in D
\]
in order is the mapping
\[
d\in D\mapsto\xi_{2}d_{2}\in D
\]
while the composition of the two fabulous mappings
\[
d\in D\mapsto(d,d)\in D(2)
\]
\[
(d_{1},d_{2})\in D(2)\mapsto\xi_{1}d_{1}+\xi_{2}d_{2}\in D
\]
in order is no other than the mapping
\[
d\in D\mapsto(\xi_{1}+\xi_{2})d\in D
\]

\item To see that the addition of vectors distributes with respect to the
scalar multiplication, it suffices to note that, for any $\xi\in k$, the
composition of the two fabulous mappings
\[
d\in D\mapsto(d,0)\in D(2)
\]
\[
(d_{1},d_{2})\in D(2)\mapsto(\xi d_{1},\xi d_{2})\in D(2)
\]
in order is the composition of the two fabulous mappings
\[
d\in D\mapsto\xi d\in D
\]
\[
d\in D\mapsto(d,0)\in D(2)
\]
in order, and the composition of the two fabulous mappings
\[
d\in D\mapsto(0,d)\in D(2)
\]
\[
(d_{1},d_{2})\in D(2)\mapsto(\xi d_{1},\xi d_{2})\in D(2)
\]
in order is the composition of the two fabulous mappings
\[
d\in D\mapsto\xi d\in D
\]
\[
d\in D\mapsto(0,d)\in D(2)
\]
in order, while the composition of the two fabulous mappings
\[
d\in D\mapsto(d,d)\in D(2)
\]
\[
(d_{1},d_{2})\in D(2)\mapsto(\xi d_{1},\xi d_{2})\in D(2)
\]
in order is no other than the composition of the two fabulous mappings
\[
d\in D\mapsto\xi d\in D
\]
\[
d\in D\mapsto(d,d)\in D(2)
\]
in order.

\item The verification of the other axioms for being a $k$-module can safely
be left to the reader.
\end{enumerate}
\end{proof}

\begin{remark}
Given a morphims $f:N\rightarrow M$ in $\mathcal{K}$, the pullback functor
$f^{\ast}:\mathcal{K}/M\rightarrow\mathcal{K}/N$ is left exact ( preserving
finite products, in particular), so that
\[
f^{\ast}\left(
\begin{array}
[c]{c}%
M\otimes\mathcal{W}_{D}\\%
\begin{array}
[c]{ccc}%
\tau_{\mathcal{W}_{D}}(M) & \downarrow & \,\quad\quad\quad
\end{array}
\\
M
\end{array}
\right)
\]
is a $k$-module in $\mathcal{K}/N$. In case that $N=1$, we have $\mathcal{K}%
/1=\mathcal{K}$ yielding the notion of the tangent space $\left(
M\otimes\mathcal{W}_{D}\right)  _{x}$\ of $M$\ at a point $x:1\rightarrow M$.
\end{remark}

\section{\label{s4}The Lie Algebra of Vector Fields}

The totality of vector fields on $M$\ can be delineated in two distinct ways.
First we must fix our notation.

\begin{notation}
We write $\left(  (M\otimes\mathcal{W}_{D})^{M}\right)  _{\mathrm{id}_{M}}$
for the pullback of
\[%
\begin{array}
[c]{ccc}%
\left(  (M\otimes\mathcal{W}_{D})^{M}\right)  _{\mathrm{id}_{M}} & \rightarrow
& (M\otimes\mathcal{W}_{D})^{M}\\
\downarrow &  & \downarrow\\
1 & \rightarrow & M^{M}%
\end{array}
\]
where the right vertical arrow is
\[
\left(  \tau_{\mathcal{W}_{D}}\left(  M\right)  \right)  ^{M}:(M\otimes
\mathcal{W}_{D})^{M}\rightarrow M^{M}\text{,}%
\]
while the bottom horizontal arrow is the exponential transpose of
\[
\mathrm{id}_{M}:M=1\times M\rightarrow M\text{.}%
\]

\end{notation}

\begin{notation}
We write $(M^{M}\otimes\mathcal{W}_{D})_{\mathrm{id}_{M}}$ for the pullback
of
\[%
\begin{array}
[c]{ccc}%
(M^{M}\otimes\mathcal{W}_{D})_{\mathrm{id}_{M}} & \rightarrow & M^{M}%
\otimes\mathcal{W}_{D}\\
\downarrow &  & \downarrow\\
1 & \rightarrow & M^{M}%
\end{array}
\]
where the right vertical arrow is
\[
\tau_{M^{M}}:M^{M}\otimes\mathcal{W}_{D}\rightarrow M^{M}\text{,}%
\]
while the bottom horizontal arrow is the exponential transpose of
\[
\mathrm{id}_{M}:M=1\times M\rightarrow M\text{.}%
\]

\end{notation}

\begin{theorem}
\label{t4.1}The object $\left(  (M\otimes\mathcal{W}_{D})^{M}\right)
_{\mathrm{id}_{M}}$ can naturally be identified with the object $(M^{M}%
\otimes\mathcal{W}_{D})_{\mathrm{id}_{M}}$.
\end{theorem}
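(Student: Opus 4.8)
The plan is to realize the two objects as pullbacks of one and the same cospan, and then to appeal to the essential uniqueness of pullbacks. The decisive ingredient is the natural isomorphism furnished by the Weil exponentiability of $M$: for each Weil algebra $\mathcal{W}$ one has a canonical isomorphism $\theta_{\mathcal{W}}:(M\otimes\mathcal{W})^{M}\rightarrow M^{M}\otimes\mathcal{W}$, and this family is natural in $\mathcal{W}\in\mathbf{Weil}_{k}$. First I would specialize $\theta$ to $\mathcal{W}=\mathcal{W}_{D}$, obtaining an isomorphism $\theta_{\mathcal{W}_{D}}:(M\otimes\mathcal{W}_{D})^{M}\rightarrow M^{M}\otimes\mathcal{W}_{D}$ between the apices of the two cospans appearing in the two Notations above.

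Next I would verify that $\theta$ carries the leg $\left(\tau_{\mathcal{W}_{D}}(M)\right)^{M}$ of the first cospan to the leg $\tau_{M^{M}}$ of the second. Both projections arise from the Weil functor applied to the single Weil-algebra homomorphism $\mathcal{W}_{D}\rightarrow\mathcal{W}_{1}$ underlying $\tau$: the former by applying $(-)^{M}$ to the projection on $M$, the latter by applying the same Weil functor directly to $M^{M}$. Hence the naturality square for $\theta$ associated with this homomorphism reads
\[
\theta_{\mathcal{W}_{1}}\circ\left(\tau_{\mathcal{W}_{D}}(M)\right)^{M}=\tau_{M^{M}}\circ\theta_{\mathcal{W}_{D}},
\]
and since $\theta_{\mathcal{W}_{1}}$ is the identity under the canonical identifications $(M\otimes\mathcal{W}_{1})^{M}=M^{M}=M^{M}\otimes\mathcal{W}_{1}$, this says precisely that $\theta_{\mathcal{W}_{D}}$ intertwines the two projections. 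The remaining legs of the two cospans, namely the points $1\rightarrow M^{M}$, are literally identical, being in both cases the exponential transpose of $\mathrm{id}_{M}:M=1\times M\rightarrow M$; and the base object $M^{M}$ is common to both.

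With the two cospans thus matched---along the isomorphism $\theta_{\mathcal{W}_{D}}$ of their apices, together with the equality of their remaining two vertices and of both remaining legs---their pullbacks are canonically isomorphic, which furnishes the desired natural identification of $\left((M\otimes\mathcal{W}_{D})^{M}\right)_{\mathrm{id}_{M}}$ with $(M^{M}\otimes\mathcal{W}_{D})_{\mathrm{id}_{M}}$. The main obstacle I anticipate is the middle step: one must pin down that the Weil-exponentiability isomorphism $\theta$ is genuinely natural in the Weil-algebra variable and that it degenerates to the identity at $\mathcal{W}_{1}$, for it is exactly this compatibility that makes the two tangent-bundle projections correspond. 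Once that is secured, the pullback comparison is purely formal.
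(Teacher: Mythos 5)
Your proposal is correct and is essentially the paper's own argument: the paper likewise invokes the Weil-exponentiability identification $(M\otimes\mathcal{W}_{D})^{M}=M^{M}\otimes\mathcal{W}_{D}$ and observes that it intertwines $\left(\tau_{\mathcal{W}_{D}}(M)\right)^{M}$ with $\tau_{\mathcal{W}_{D}}(M^{M})$, whence the two pullbacks along the common point $1\rightarrow M^{M}$ coincide. The only difference is one of detail: where the paper simply asserts the commutativity of the comparison triangle, you justify it via naturality of the identification in the Weil-algebra variable applied to $\mathcal{W}_{D}\rightarrow\mathcal{W}_{1}$, which is exactly the right way to pin that step down.
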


\begin{proof}
It suffices to note that the diagram
\[%
\begin{array}
[c]{ccc}%
(M\otimes\mathcal{W}_{D})^{M} & = & M^{M}\otimes\mathcal{W}_{D}\\
\searrow &  & \swarrow\\
& M^{M} &
\end{array}
\]
is commutaive, where the right slant arrow is $\tau_{\mathcal{W}_{D}}\left(
M^{M}\right)  :M^{M}\otimes\mathcal{W}_{D}\rightarrow M^{M}$, while the left
slant arrow is $\left(  \tau_{\mathcal{W}_{D}}\left(  M\right)  \right)
^{M}:(M\otimes\mathcal{W}_{D})^{M}\rightarrow M^{M}$.
\end{proof}

\begin{remark}
Thus the totality of vector fields on $M$\ is represented by $(M^{M}%
\otimes\mathcal{W}_{D})_{\mathrm{id}_{M}}$ as well as by $\left(
(M\otimes\mathcal{W}_{D})^{M}\right)  _{\mathrm{id}_{M}}$. The first
viewpoint, which reckons vector fields as the tangent space to $M$\ at the
identitity transformation, is preferred in this paper, while the second
viewpoint, which regards vector fields on $M$\ as sections of the tangent
bundle $\tau_{\mathcal{W}_{D}}(M):M\otimes\mathcal{W}_{D}\rightarrow M$, has
been orthodox in conventional differential geomety.
\end{remark}

\begin{notation}
We write $\mathrm{ass}_{M}:M^{M}\times M^{M}\rightarrow M^{M}$ for the
morphism obtained as the exponential transpose of the composition of
\[
\mathrm{ev}_{M}\times\mathrm{id}_{M^{M}}:M\times M^{M}\times M^{M}=\left(
M\times M^{M}\right)  \times M^{M}\rightarrow M\times M^{M}%
\]
and
\[
\mathrm{ev}_{M}:M\times M^{M}\rightarrow M
\]
where $\mathrm{ev}_{M}:M\times M^{M}\rightarrow M$ stands for the evaluation
morphism. Similarly we write $\overline{\mathrm{ass}}_{M}:M^{M}\times
M^{M}\rightarrow M^{M}$ for the morphism obtained as the exponential transpose
of the composition of
\begin{align*}
\mathrm{id}_{M^{M}}\times\mathrm{ev}_{M}  & :M\times M^{M}\times M^{M}%
=M^{M}\times M\times M^{M}=M^{M}\times\left(  M\times M^{M}\right) \\
& \rightarrow M^{M}\times M=M\times M^{M}%
\end{align*}
and
\[
\mathrm{ev}_{M}:M\times M^{M}\rightarrow M
\]

\end{notation}

\begin{notation}
We write $\mathrm{Ass}_{M}^{m,n}:\left(  M^{M}\otimes\mathcal{W}_{D^{m}%
}\right)  \times\left(  M^{M}\otimes\mathcal{W}_{D^{n}}\right)  \rightarrow
M^{M}\otimes\mathcal{W}_{D^{m+n}}$ for the morphism obtained as the
composition of
\begin{align*}
& \left(  \mathrm{id}_{M^{M}}\otimes\mathcal{W}_{\left(  d_{1},...,d_{m}%
,d_{m+1},...,d_{m+n}\right)  \in D^{m+n}\mapsto\left(  d_{1},...,d_{m}\right)
\in D^{m}}\right)  \times\\
& \left(  \mathrm{id}_{M^{M}}\otimes\mathcal{W}_{\left(  d_{1},...,d_{m}%
,d_{m+1},...,d_{m+n}\right)  \in D^{m+n}\mapsto\left(  d_{m+1},...,d_{m+n}%
\right)  \in D^{n}}\right) \\
& :\left(  M^{M}\otimes\mathcal{W}_{D^{m}}\right)  \times\left(  M^{M}%
\otimes\mathcal{W}_{D^{n}}\right)  \rightarrow\left(  M^{M}\otimes
\mathcal{W}_{D^{m+n}}\right)  \times\left(  M^{M}\otimes\mathcal{W}_{D^{m+n}%
}\right) \\
& =\left(  M^{M}\times M^{M}\right)  \otimes\mathcal{W}_{D^{m+n}}%
\end{align*}
\[
\mathrm{ass}_{M}\otimes\mathrm{id}_{\mathcal{W}_{D^{m+n}}}:\left(  M^{M}\times
M^{M}\right)  \otimes\mathcal{W}_{D^{m+n}}\rightarrow M^{M}\otimes
\mathcal{W}_{D^{m+n}}%
\]
in succession.
\end{notation}

\begin{lemma}
\label{t4.2'}We have such laws of associativity as
\[
\mathrm{ass}_{M}\circ\left(  \mathrm{ass}_{M}\times\mathrm{id}_{M^{M}}\right)
=\mathrm{ass}_{M}\circ\left(  \mathrm{id}_{M^{M}}\times\mathrm{ass}%
_{M}\right)
\]
\[
\overline{\mathrm{ass}}_{M}\circ\left(  \overline{\mathrm{ass}}_{M}%
\times\mathrm{id}_{M^{M}}\right)  =\overline{\mathrm{ass}}_{M}\circ\left(
\mathrm{id}_{M^{M}}\times\overline{\mathrm{ass}}_{M}\right)
\]

\end{lemma}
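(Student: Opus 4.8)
The plan is to recognize both $\mathrm{ass}_M$ and $\overline{\mathrm{ass}}_M$ as element-free renderings of composition of endomorphisms of $M$ --- heuristically $\mathrm{ass}_M(f,g)=g\circ f$ and $\overline{\mathrm{ass}}_M(f,g)=f\circ g$ --- so that each displayed identity is nothing but the associativity of composition, dressed so as to avoid speaking of elements. Since we are not permitted to argue pointwise, I would instead lean on the universal property of the exponential $M^M$: two morphisms with codomain $M^M$ coincide as soon as their exponential transposes coincide. To establish the first law it therefore suffices to prove that
\[
\mathrm{ev}_M\circ\left(\mathrm{id}_M\times\left(\mathrm{ass}_M\circ(\mathrm{ass}_M\times\mathrm{id}_{M^M})\right)\right)=\mathrm{ev}_M\circ\left(\mathrm{id}_M\times\left(\mathrm{ass}_M\circ(\mathrm{id}_{M^M}\times\mathrm{ass}_M)\right)\right)
\]
as morphisms $M\times M^M\times M^M\times M^M\rightarrow M$.

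The single instrument driving the computation is the defining transpose relation for $\mathrm{ass}_M$, namely
\[
\mathrm{ev}_M\circ(\mathrm{id}_M\times\mathrm{ass}_M)=\mathrm{ev}_M\circ(\mathrm{ev}_M\times\mathrm{id}_{M^M}),
\]
a morphism $M\times M^M\times M^M\rightarrow M$ read off from the Notation above. I would expand the left-hand side as $\mathrm{ev}_M\circ(\mathrm{id}_M\times\mathrm{ass}_M)\circ(\mathrm{id}_M\times\mathrm{ass}_M\times\mathrm{id}_{M^M})$ using functoriality of the product, apply the transpose relation to rewrite the outer $\mathrm{ev}_M\circ(\mathrm{id}_M\times\mathrm{ass}_M)$ as $\mathrm{ev}_M\circ(\mathrm{ev}_M\times\mathrm{id}_{M^M})$, and then, after sliding the surviving interior $\mathrm{ass}_M$ past the evaluation by the interchange law, apply the same relation a second time. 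The right-hand side is handled identically, the only difference being the order in which the two applications present themselves. Both composites collapse to one and the same triple-evaluation morphism
\[
\mathrm{ev}_M\circ(\mathrm{ev}_M\times\mathrm{id}_{M^M})\circ(\mathrm{ev}_M\times\mathrm{id}_{M^M}\times\mathrm{id}_{M^M}),
\]
the element-free incarnation of $(x,f,g,h)\mapsto h(g(f(x)))$, whence the two sides agree. The law for $\overline{\mathrm{ass}}_M$ follows by the verbatim argument, with its own defining transpose relation substituted for the one above.

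The routine but genuinely delicate part will be the bookkeeping of the associativity and symmetry isomorphisms of the iterated finite product $M\times M^M\times M^M\times M^M$: at each step one must insert exactly the right rebracketing so that the exponential adjunction, and each interchange of product factors, acts on the intended pair of slots. This is somewhat heavier for $\overline{\mathrm{ass}}_M$, whose very definition already incorporates the symmetry $M^M\times M=M\times M^M$, so that the reductions interleave evaluations with transpositions of factors. Keeping these coherence isomorphisms straight, rather than any conceptual difficulty, is the only real obstacle.
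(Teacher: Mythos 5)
Your proposal is correct, but there is nothing in the paper to compare it against: Lemma \ref{t4.2'} is stated with no proof whatsoever, the author evidently regarding it as the self-evident fact that $M^{M}$ under composition is a monoid object (your heuristic readings $\mathrm{ass}_{M}(f,g)=g\circ f$ and $\overline{\mathrm{ass}}_{M}(f,g)=f\circ g$ are exactly the intended ones). Your argument therefore supplies the verification the paper leaves implicit, and it is the natural one. The reduction to transposes is legitimate, since exponential transposition is a bijection $\mathcal{K}(A,M^{M})\cong\mathcal{K}(M\times A,M)$, and the computation does close up: writing $e=\mathrm{ev}_{M}$ and using the defining relation $e\circ(\mathrm{id}_{M}\times\mathrm{ass}_{M})=e\circ(e\times\mathrm{id}_{M^{M}})$ twice, the transpose of $\mathrm{ass}_{M}\circ(\mathrm{ass}_{M}\times\mathrm{id}_{M^{M}})$ collapses via the interchange $(e\times\mathrm{id}_{M^{M}})\circ\left((\mathrm{id}_{M}\times\mathrm{ass}_{M})\times\mathrm{id}_{M^{M}}\right)=\left(e\circ(\mathrm{id}_{M}\times\mathrm{ass}_{M})\right)\times\mathrm{id}_{M^{M}}$, while that of $\mathrm{ass}_{M}\circ(\mathrm{id}_{M^{M}}\times\mathrm{ass}_{M})$ collapses via $(e\times\mathrm{id}_{M^{M}})\circ(\mathrm{id}_{M\times M^{M}}\times\mathrm{ass}_{M})=(\mathrm{id}_{M}\times\mathrm{ass}_{M})\circ(e\times\mathrm{id}_{M^{M}\times M^{M}})$, and both land on the triple evaluation $e\circ(e\times\mathrm{id}_{M^{M}})\circ(e\times\mathrm{id}_{M^{M}}\times\mathrm{id}_{M^{M}})$, as you assert. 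The one place demanding genuine care is the one you flag yourself: $\overline{\mathrm{ass}}_{M}$ is defined through the symmetries $M\times M^{M}\times M^{M}=M^{M}\times M\times M^{M}$ and $M^{M}\times M=M\times M^{M}$, so its transpose relation carries these isomorphisms along and each interchange step must be checked to commute with them; that is routine bookkeeping rather than a gap, so your proof stands as a complete substitute for the proof the paper omits.
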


\begin{proposition}
\label{t4.2}We have such laws of associativity as follows:
\begin{align}
& \mathrm{Ass}_{M}^{l+m,n}\circ\left(  \mathrm{Ass}_{M}^{l,m}\times
\mathrm{id}_{M^{M}\otimes\mathcal{W}_{D^{n}}}\right) \nonumber\\
& =\mathrm{Ass}_{M}^{l,m+n}\circ\left(  \mathrm{id}_{M^{M}\otimes
\mathcal{W}_{D^{l}}}\times\mathrm{Ass}_{M}^{m,n}\right) \label{4.2.0.a}%
\end{align}
and
\begin{align}
& \overline{\mathrm{Ass}}_{M}^{l+m,n}\circ\left(  \overline{\mathrm{Ass}}%
_{M}^{l,m}\times\mathrm{id}_{M^{M}\otimes\mathcal{W}_{D^{n}}}\right)
\nonumber\\
& =\overline{\mathrm{Ass}}_{M}^{l,m+n}\circ\left(  \mathrm{id}_{M^{M}%
\otimes\mathcal{W}_{D^{l}}}\times\overline{\mathrm{Ass}}_{M}^{m,n}\right)
\label{4.2.0.b}%
\end{align}

\end{proposition}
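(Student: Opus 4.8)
The plan is to deduce the extended associativity laws \eqref{4.2.0.a} and \eqref{4.2.0.b} from the base laws of Lemma \ref{t4.2'}, by showing that once the definition of $\mathrm{Ass}_{M}^{m,n}$ is unwound, both sides of each identity factor through one and the same ``triple-expansion'' morphism, the two factorizations differing only in the inner morphism assembled out of $\mathrm{ass}_{M}$. Under the synthetic heuristic $\mathrm{ass}_{M}(f,g)=g\circ f$, a triple $(\gamma,\delta,\varepsilon)$ is sent by both sides to $\varepsilon\circ\delta\circ\gamma$ (reindexed over the appropriate coordinate blocks), so the whole content of the statement is the bracketing invariance of $\circ$; the task is to render this categorical.

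First I would introduce the three coordinate projections, regarded as fabulous mappings,
\begin{align*}
\pi_{1}&:(d_{1},\ldots,d_{l+m+n})\in D^{l+m+n}\mapsto(d_{1},\ldots,d_{l})\in D^{l},\\
\pi_{2}&:(d_{1},\ldots,d_{l+m+n})\in D^{l+m+n}\mapsto(d_{l+1},\ldots,d_{l+m})\in D^{m},\\
\pi_{3}&:(d_{1},\ldots,d_{l+m+n})\in D^{l+m+n}\mapsto(d_{l+m+1},\ldots,d_{l+m+n})\in D^{n},
\end{align*}
and assemble them into the single morphism
\begin{align*}
P={}&(\mathrm{id}_{M^{M}}\otimes\mathcal{W}_{\pi_{1}})\times(\mathrm{id}_{M^{M}}\otimes\mathcal{W}_{\pi_{2}})\times(\mathrm{id}_{M^{M}}\otimes\mathcal{W}_{\pi_{3}}):\\
&(M^{M}\otimes\mathcal{W}_{D^{l}})\times(M^{M}\otimes\mathcal{W}_{D^{m}})\times(M^{M}\otimes\mathcal{W}_{D^{n}})\\
&\qquad\rightarrow(M^{M}\times M^{M}\times M^{M})\otimes\mathcal{W}_{D^{l+m+n}},
\end{align*}
the codomain being identified through the distribution of $\otimes\mathcal{W}_{D^{l+m+n}}$ over the threefold product. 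The core of the argument is then to verify the two factorizations
\begin{align*}
&\mathrm{Ass}_{M}^{l+m,n}\circ(\mathrm{Ass}_{M}^{l,m}\times\mathrm{id}_{M^{M}\otimes\mathcal{W}_{D^{n}}})\\
&\qquad=\bigl((\mathrm{ass}_{M}\circ(\mathrm{ass}_{M}\times\mathrm{id}_{M^{M}}))\otimes\mathrm{id}_{\mathcal{W}_{D^{l+m+n}}}\bigr)\circ P,
\end{align*}
and
\begin{align*}
&\mathrm{Ass}_{M}^{l,m+n}\circ(\mathrm{id}_{M^{M}\otimes\mathcal{W}_{D^{l}}}\times\mathrm{Ass}_{M}^{m,n})\\
&\qquad=\bigl((\mathrm{ass}_{M}\circ(\mathrm{id}_{M^{M}}\times\mathrm{ass}_{M}))\otimes\mathrm{id}_{\mathcal{W}_{D^{l+m+n}}}\bigr)\circ P.
\end{align*}

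To establish these I would unwind each side using the definition of $\mathrm{Ass}_{M}^{m,n}$ as $(\mathrm{ass}_{M}\otimes\mathrm{id})$ precomposed with a product of Weil-coordinate embeddings, and then push every occurrence of $\mathrm{ass}_{M}\otimes\mathrm{id}$ outward past those embeddings. Three mechanisms do the work: the contravariant functoriality of $\mathrm{id}_{M^{M}}\otimes\mathcal{W}_{(-)}$ in the infinitesimal object, which collapses each nested pair of coordinate projections $\rho\circ\sigma$ arising in the two associations into the single projection $\pi_{i}$; the bifunctoriality of $\otimes$, which both lets $\mathrm{ass}_{M}\otimes\mathrm{id}$ commute past an $\mathrm{id}\otimes\mathcal{W}_{(-)}$ and lets two successive $\mathrm{ass}_{M}$-layers coalesce into $(\mathrm{ass}_{M}\circ(\mathrm{ass}_{M}\times\mathrm{id}_{M^{M}}))\otimes\mathrm{id}$; and the interchange law for Cartesian products together with the distribution $(A\times B)\otimes\mathcal{W}_{W}=(A\otimes\mathcal{W}_{W})\times(B\otimes\mathcal{W}_{W})$ already exploited in the Notation defining $\mathrm{Ass}_{M}^{m,n}$. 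Once both sides are brought to the common shape $(\,\text{inner}\otimes\mathrm{id}_{\mathcal{W}_{D^{l+m+n}}})\circ P$, Lemma \ref{t4.2'} supplies $\mathrm{ass}_{M}\circ(\mathrm{ass}_{M}\times\mathrm{id}_{M^{M}})=\mathrm{ass}_{M}\circ(\mathrm{id}_{M^{M}}\times\mathrm{ass}_{M})$, whence \eqref{4.2.0.a} follows at once; \eqref{4.2.0.b} follows in the same way from the second identity of Lemma \ref{t4.2'}, with $\overline{\mathrm{ass}}_{M}$ in place of $\mathrm{ass}_{M}$ throughout.

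The main obstacle is not conceptual but is the bookkeeping inside the two factorizations: one must track exactly which block of coordinates each projection isolates and check that the nested projections produced by the left association $(\rho_{1}\circ\sigma_{1},\rho_{2}\circ\sigma_{1},\sigma_{2})$ and by the right association both reduce to the very same triple $(\pi_{1},\pi_{2},\pi_{3})$, so that a single morphism $P$ legitimately serves both sides. Verifying that the sliding of $\mathrm{ass}_{M}\otimes\mathrm{id}$ past the Weil embeddings is licensed by bifunctoriality — rather than merely plausible — is the step that must be carried out with care; everything after the common form $(\,\text{inner}\otimes\mathrm{id})\circ P$ is immediate.
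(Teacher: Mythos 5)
Your proposal is correct and is essentially the paper's own argument: the paper likewise unwinds the definition of $\mathrm{Ass}_{M}^{m,n}$, uses bifunctoriality of $\otimes$ to slide the inner $\mathrm{ass}_{M}\otimes\mathrm{id}$ layer past the outer Weil coordinate embeddings, collapses the nested coordinate projections into a single common expansion morphism (its (\ref{4.2.10}) and (\ref{4.2.10'}), which are your $P$ up to reassociation of the triple product), and then concludes from Lemma \ref{t4.2'} via the factorizations $\left(\mathrm{ass}_{M}\circ\left(\mathrm{ass}_{M}\times\mathrm{id}_{M^{M}}\right)\right)\otimes\mathrm{id}$ and $\left(\mathrm{ass}_{M}\circ\left(\mathrm{id}_{M^{M}}\times\mathrm{ass}_{M}\right)\right)\otimes\mathrm{id}$, exactly as you describe. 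The only cosmetic difference is that the paper keeps the two bracketings of the threefold product explicit rather than passing to a single unbracketed $P$.
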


\begin{proof}
Here we deal only with the former, leaving the latter to the reader. Now we
have to show that the composition of
\begin{align}
\mathrm{Ass}_{M}^{l,m}\times\mathrm{id}_{M^{M}\otimes\mathcal{W}_{D^{n}}}  &
:\left(  M^{M}\otimes\mathcal{W}_{D^{l}}\right)  \times\left(  M^{M}%
\otimes\mathcal{W}_{D^{m}}\right)  \times\left(  M^{M}\otimes\mathcal{W}%
_{D^{n}}\right) \nonumber\\
& =\left(  \left(  M^{M}\otimes\mathcal{W}_{D^{l}}\right)  \times\left(
M^{M}\otimes\mathcal{W}_{D^{m}}\right)  \right)  \times\left(  M^{M}%
\otimes\mathcal{W}_{D^{n}}\right) \nonumber\\
& \rightarrow\left(  M^{M}\otimes\mathcal{W}_{D^{l+m}}\right)  \times\left(
M^{M}\otimes\mathcal{W}_{D^{n}}\right) \label{4.2.20}%
\end{align}
followed by
\begin{equation}
\mathrm{Ass}_{M}^{l+m,n}:\left(  M^{M}\otimes\mathcal{W}_{D^{l+m}}\right)
\times\left(  M^{M}\otimes\mathcal{W}_{D^{n}}\right)  \rightarrow M^{M}%
\otimes\mathcal{W}_{D^{l+m+n}}\label{4.2.21}%
\end{equation}
is equal to that of
\begin{align}
\mathrm{id}_{M^{M}\otimes\mathcal{W}_{D^{l}}}\times\mathrm{Ass}_{M}^{m,n}  &
:\left(  M^{M}\otimes\mathcal{W}_{D^{l}}\right)  \times\left(  M^{M}%
\otimes\mathcal{W}_{D^{m}}\right)  \times\left(  M^{M}\otimes\mathcal{W}%
_{D^{n}}\right) \nonumber\\
& =\left(  M^{M}\otimes\mathcal{W}_{D^{l}}\right)  \times\left(  \left(
M^{M}\otimes\mathcal{W}_{D^{m}}\right)  \times\left(  M^{M}\otimes
\mathcal{W}_{D^{n}}\right)  \right) \nonumber\\
& \rightarrow\left(  M^{M}\otimes\mathcal{W}_{D^{l}}\right)  \times\left(
M^{M}\otimes\mathcal{W}_{D^{m+n}}\right) \label{4.2.22}%
\end{align}
followed by
\begin{equation}
\mathrm{Ass}_{M}^{l,m+n}:\left(  M^{M}\otimes\mathcal{W}_{D^{l}}\right)
\times\left(  M^{M}\otimes\mathcal{W}_{D^{m+n}}\right)  \rightarrow
M^{M}\otimes\mathcal{W}_{D^{l+m+n}}\label{4.2.23}%
\end{equation}
The former is the composition of
\begin{align}
& \left(  M^{M}\otimes\mathcal{W}_{D^{l}}\right)  \times\left(  M^{M}%
\otimes\mathcal{W}_{D^{m}}\right)  \times\left(  M^{M}\otimes\mathcal{W}%
_{D^{n}}\right) \nonumber\\
& =\left(  \left(  M^{M}\otimes\mathcal{W}_{D^{l}}\right)  \times\left(
M^{M}\otimes\mathcal{W}_{D^{m}}\right)  \right)  \times\left(  M^{M}%
\otimes\mathcal{W}_{D^{n}}\right) \nonumber\\
& \rightarrow\left(  \left(  M^{M}\otimes\mathcal{W}_{D^{l+m}}\right)
\times\left(  M^{M}\otimes\mathcal{W}_{D^{l+m}}\right)  \right)  \times\left(
M^{M}\otimes\mathcal{W}_{D^{n}}\right) \nonumber\\
& =\left(  \left(  M^{M}\times M^{M}\right)  \otimes\mathcal{W}_{D^{l+m}%
}\right)  \times\left(  M^{M}\otimes\mathcal{W}_{D^{n}}\right)  \text{,}%
\label{4.2.1}%
\end{align}
\begin{align}
\left(  \mathrm{ass}_{M}\otimes\mathrm{id}_{\mathcal{W}_{D^{l+m}}}\right)
\times\mathrm{id}_{M^{M}\otimes\mathcal{W}_{D^{n}}}  & :\left(  \left(
M^{M}\times M^{M}\right)  \otimes\mathcal{W}_{D^{l+m}}\right)  \times\left(
M^{M}\otimes\mathcal{W}_{D^{n}}\right) \nonumber\\
& \rightarrow\left(  M^{M}\otimes\mathcal{W}_{D^{l+m}}\right)  \times\left(
M^{M}\otimes\mathcal{W}_{D^{n}}\right)  \text{,}\label{4.2.2}%
\end{align}
\begin{align}
& \left(  M^{M}\otimes\mathcal{W}_{D^{l+m}}\right)  \times\left(  M^{M}%
\otimes\mathcal{W}_{D^{n}}\right) \nonumber\\
& \rightarrow\left(  M^{M}\otimes\mathcal{W}_{D^{l+m+n}}\right)  \times\left(
M^{M}\otimes\mathcal{W}_{D^{l+m+n}}\right) \nonumber\\
& =\left(  M^{M}\times M^{M}\right)  \otimes\mathcal{W}_{D^{l+m+n}%
}\label{4.2.3}%
\end{align}
and
\begin{equation}
\mathrm{ass}_{M}\otimes\mathrm{id}_{\mathcal{W}_{D^{l+m+n}}}:\left(
M^{M}\times M^{M}\right)  \otimes\mathcal{W}_{D^{l+m+n}}\rightarrow
M^{M}\otimes\mathcal{W}_{D^{l+m+n}}\label{4.2.4}%
\end{equation}
in succession, while the latter is the composition of
\begin{align}
& \left(  M^{M}\otimes\mathcal{W}_{D^{l}}\right)  \times\left(  M^{M}%
\otimes\mathcal{W}_{D^{m}}\right)  \times\left(  M^{M}\otimes\mathcal{W}%
_{D^{n}}\right) \nonumber\\
& =\left(  M^{M}\otimes\mathcal{W}_{D^{l}}\right)  \times\left(  \left(
M^{M}\otimes\mathcal{W}_{D^{m}}\right)  \times\left(  M^{M}\otimes
\mathcal{W}_{D^{n}}\right)  \right) \nonumber\\
& \rightarrow\left(  M^{M}\otimes\mathcal{W}_{D^{l}}\right)  \times\left(
\left(  M^{M}\otimes\mathcal{W}_{D^{m+n}}\right)  \times\left(  M^{M}%
\otimes\mathcal{W}_{D^{m+n}}\right)  \right) \nonumber\\
& =\left(  M^{M}\otimes\mathcal{W}_{D^{l}}\right)  \times\left(  \left(
M^{M}\times M^{M}\right)  \otimes\mathcal{W}_{D^{m+n}}\right)  \text{,}%
\label{4.2.5}%
\end{align}
\begin{align}
\mathrm{id}_{M^{M}\otimes\mathcal{W}_{D^{l}}}\times\left(  \mathrm{ass}%
_{M}\otimes\mathrm{id}_{\mathcal{W}_{D^{m+n}}}\right)   & :\left(
M^{M}\otimes\mathcal{W}_{D^{l}}\right)  \times\left(  \left(  M^{M}\times
M^{M}\right)  \otimes\mathcal{W}_{D^{m+n}}\right) \nonumber\\
& \rightarrow\left(  M^{M}\otimes\mathcal{W}_{D^{l}}\right)  \times\left(
M^{M}\otimes\mathcal{W}_{D^{m+n}}\right)  \text{,}\label{4.2.6}%
\end{align}
\begin{align}
& \left(  M^{M}\otimes\mathcal{W}_{D^{l+m}}\right)  \times\left(  M^{M}%
\otimes\mathcal{W}_{D^{n}}\right) \nonumber\\
& \rightarrow\left(  M^{M}\otimes\mathcal{W}_{D^{l+m+n}}\right)  \times\left(
M^{M}\otimes\mathcal{W}_{D^{l+m+n}}\right) \nonumber\\
& =\left(  M^{M}\times M^{M}\right)  \otimes\mathcal{W}_{D^{l+m+n}%
}\label{4.2.7}%
\end{align}
and (\ref{4.2.4}) in succession. Since the composition of (\ref{4.2.2}) and
(\ref{4.2.3}) in succession is equal to that of
\begin{align}
& \left(  \left(  M^{M}\times M^{M}\right)  \otimes\mathcal{W}_{D^{l+m}%
}\right)  \times\left(  M^{M}\otimes\mathcal{W}_{D^{n}}\right) \nonumber\\
& \rightarrow\left(  \left(  M^{M}\times M^{M}\right)  \otimes\mathcal{W}%
_{D^{l+m+n}}\right)  \times\left(  M^{M}\otimes\mathcal{W}_{D^{l+m+n}}\right)
\nonumber\\
& =\left(  \left(  M^{M}\times M^{M}\right)  \times M^{M}\right)
\otimes\mathcal{W}_{D^{l+m+n}}\label{4.2.8}%
\end{align}
and
\begin{align}
\left(  \mathrm{ass}_{M}\times\mathrm{id}_{M^{M}}\right)  \otimes
\mathrm{id}_{\mathcal{W}_{D^{l+m+n}}}  & :\left(  \left(  M^{M}\times
M^{M}\right)  \times M^{M}\right)  \otimes\mathcal{W}_{D^{l+m+n}}\nonumber\\
& \rightarrow\left(  M^{M}\times M^{M}\right)  \otimes\mathcal{W}_{D^{l+m+n}%
}\label{4.2.9}%
\end{align}
in succession, we conclude that the left-hand side of (\ref{4.2.0.a}) is equal
to the composition of (\ref{4.2.1}), (\ref{4.2.8}), (\ref{4.2.9})and
(\ref{4.2.4}) in succession, which is in turn equal to the composition of
\begin{align}
& \left(  M^{M}\otimes\mathcal{W}_{D^{l}}\right)  \times\left(  M^{M}%
\otimes\mathcal{W}_{D^{m}}\right)  \times\left(  M^{M}\otimes\mathcal{W}%
_{D^{n}}\right) \nonumber\\
& \rightarrow\left(  M^{M}\otimes\mathcal{W}_{D^{l+m+n}}\right)  \times\left(
M^{M}\otimes\mathcal{W}_{D^{l+m+n}}\right)  \times\left(  M^{M}\otimes
\mathcal{W}_{D^{l+m+n}}\right) \nonumber\\
& =\left(  M^{M}\times M^{M}\times M^{M}\right)  \otimes\mathcal{W}%
_{D^{l+m+n}}\nonumber\\
& =\left(  \left(  M^{M}\times M^{M}\right)  \times M^{M}\right)
\otimes\mathcal{W}_{D^{l+m+n}}\text{,}\label{4.2.10}%
\end{align}
(\ref{4.2.9}) and (\ref{4.2.4}) in succession. Similarly, the right-hand side
of (\ref{4.2.0.a}) is the composition of
\begin{align}
& \left(  M^{M}\otimes\mathcal{W}_{D^{l}}\right)  \times\left(  M^{M}%
\otimes\mathcal{W}_{D^{m}}\right)  \times\left(  M^{M}\otimes\mathcal{W}%
_{D^{n}}\right) \nonumber\\
& \rightarrow\left(  M^{M}\otimes\mathcal{W}_{D^{l+m+n}}\right)  \times\left(
M^{M}\otimes\mathcal{W}_{D^{l+m+n}}\right)  \times\left(  M^{M}\otimes
\mathcal{W}_{D^{l+m+n}}\right) \nonumber\\
& =\left(  M^{M}\times M^{M}\times M^{M}\right)  \otimes\mathcal{W}%
_{D^{l+m+n}}\nonumber\\
& =\left(  M^{M}\times\left(  M^{M}\times M^{M}\right)  \right)
\otimes\mathcal{W}_{D^{l+m+n}}\text{,}\label{4.2.10'}%
\end{align}
\begin{align}
\left(  \mathrm{id}_{M^{M}}\times\mathrm{ass}_{M}\right)  \otimes
\mathrm{id}_{\mathcal{W}_{D^{l+m+n}}}  & :\left(  M^{M}\times\left(
M^{M}\times M^{M}\right)  \right)  \otimes\mathcal{W}_{D^{l+m+n}}\nonumber\\
& \rightarrow\left(  M^{M}\times M^{M}\right)  \otimes\mathcal{W}_{D^{l+m+n}%
}\label{4.2.9'}%
\end{align}
and (\ref{4.2.4}) in succession. Therefore the desired result follows from
Lemma \ref{t4.2'}.
\end{proof}

\begin{remark}
By the above associativity, we can unambiguously define such a morphism as
\begin{align*}
\mathrm{Ass}_{M}^{1,1,1}  & :\left(  M^{M}\otimes\mathcal{W}_{D}\right)
\times\left(  M^{M}\otimes\mathcal{W}_{D}\right)  \times\left(  M^{M}%
\otimes\mathcal{W}_{D}\right) \\
& \rightarrow M^{M}\otimes\mathcal{W}_{D^{3}}%
\end{align*}
to be either the composition of
\begin{align*}
\mathrm{Ass}_{M}^{1,1}\times\mathrm{id}_{M^{M}\otimes\mathcal{W}_{D}}  &
:\left(  M^{M}\otimes\mathcal{W}_{D}\right)  \times\left(  M^{M}%
\otimes\mathcal{W}_{D}\right)  \times\left(  M^{M}\otimes\mathcal{W}%
_{D}\right) \\
& =\left(  \left(  M^{M}\otimes\mathcal{W}_{D}\right)  \times\left(
M^{M}\otimes\mathcal{W}_{D}\right)  \right)  \times\left(  M^{M}%
\otimes\mathcal{W}_{D}\right) \\
& \rightarrow\left(  M^{M}\otimes\mathcal{W}_{D^{2}}\right)  \times\left(
M^{M}\otimes\mathcal{W}_{D}\right)
\end{align*}
and
\begin{align*}
\mathrm{Ass}_{M}^{2,1}  & :\left(  M^{M}\otimes\mathcal{W}_{D^{2}}\right)
\times\left(  M^{M}\otimes\mathcal{W}_{D}\right) \\
& \rightarrow M^{M}\otimes\mathcal{W}_{D^{3}}%
\end{align*}
in succession, or the composition of
\begin{align*}
\mathrm{id}_{M^{M}\otimes\mathcal{W}_{D}}\times\mathrm{Ass}_{M}^{1,1}  &
:\left(  M^{M}\otimes\mathcal{W}_{D}\right)  \times\left(  M^{M}%
\otimes\mathcal{W}_{D}\right)  \times\left(  M^{M}\otimes\mathcal{W}%
_{D}\right) \\
& =\left(  M^{M}\otimes\mathcal{W}_{D}\right)  \times\left(  \left(
M^{M}\otimes\mathcal{W}_{D}\right)  \times\left(  M^{M}\otimes\mathcal{W}%
_{D}\right)  \right) \\
& \rightarrow\left(  M^{M}\otimes\mathcal{W}_{D}\right)  \times\left(
M^{M}\otimes\mathcal{W}_{D^{2}}\right)
\end{align*}
and
\begin{align*}
\mathrm{Ass}_{M}^{1,2}  & :\left(  M^{M}\otimes\mathcal{W}_{D}\right)
\times\left(  M^{M}\otimes\mathcal{W}_{D^{2}}\right) \\
& \rightarrow M^{M}\otimes\mathcal{W}_{D^{3}}%
\end{align*}
in succession. Similarly for $\overline{\mathrm{Ass}}_{M}^{1,1,1}$.
\end{remark}

\begin{notation}
We write $i_{M}:1\rightarrow M^{M}$ for the exponential transpose of
\[
\mathrm{id}_{M}:1\times M=M\rightarrow M
\]

\end{notation}

\begin{lemma}
\label{t4.3'}We have
\[
\mathrm{ass}_{M}\circ\left(  i_{M}\times\mathrm{id}_{M^{M}}\right)
=\mathrm{ass}_{M}\circ\left(  \mathrm{id}_{M^{M}}\times i_{M}\right)
=\mathrm{id}_{M^{M}}%
\]
and
\[
\overline{\mathrm{ass}}_{M}\circ\left(  i_{M}\times\mathrm{id}_{M^{M}}\right)
=\overline{\mathrm{ass}}_{M}\circ\left(  \mathrm{id}_{M^{M}}\times
i_{M}\right)  =\mathrm{id}_{M^{M}}%
\]

\end{lemma}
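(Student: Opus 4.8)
The plan is to reduce every one of the four asserted equalities, via the exponential adjunction, to the single defining property of $i_{M}$, namely that $\mathrm{ev}_{M}\circ\left(\mathrm{id}_{M}\times i_{M}\right)=\mathrm{id}_{M}$ after the canonical identification $M\times1=M$. The key observation is that two morphisms with codomain $M^{M}$ coincide exactly when their exponential transposes coincide, so it suffices throughout to compare transposes, i.e.\ morphisms into $M$. Recall that $\mathrm{ass}_{M}$ was introduced as the transpose of $\mathrm{ev}_{M}\circ\left(\mathrm{ev}_{M}\times\mathrm{id}_{M^{M}}\right):M\times M^{M}\times M^{M}\rightarrow M$; hence, by naturality of the transpose under precomposition, the transpose of $\mathrm{ass}_{M}\circ\left(i_{M}\times\mathrm{id}_{M^{M}}\right)$ is
\[
\mathrm{ev}_{M}\circ\left(\mathrm{ev}_{M}\times\mathrm{id}_{M^{M}}\right)\circ\left(\mathrm{id}_{M}\times i_{M}\times\mathrm{id}_{M^{M}}\right):M\times\left(1\times M^{M}\right)\rightarrow M\text{,}
\]
and similarly for the three remaining morphisms.

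For the left-unit law $\mathrm{ass}_{M}\circ\left(i_{M}\times\mathrm{id}_{M^{M}}\right)=\mathrm{id}_{M^{M}}$, I would note that the inner evaluation collapses: by the interchange law,
\[
\left(\mathrm{ev}_{M}\times\mathrm{id}_{M^{M}}\right)\circ\left(\mathrm{id}_{M}\times i_{M}\times\mathrm{id}_{M^{M}}\right)=\left(\mathrm{ev}_{M}\circ\left(\mathrm{id}_{M}\times i_{M}\right)\right)\times\mathrm{id}_{M^{M}}=\mathrm{id}_{M}\times\mathrm{id}_{M^{M}}\text{,}
\]
using the defining property of $i_{M}$. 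Thus the displayed transpose reduces to $\mathrm{ev}_{M}$, which is precisely the transpose of $\mathrm{id}_{M^{M}}$, and the two morphisms agree. For the right-unit law $\mathrm{ass}_{M}\circ\left(\mathrm{id}_{M^{M}}\times i_{M}\right)=\mathrm{id}_{M^{M}}$, the same reduction instead leaves $i_{M}$ in the outer slot; here I would factor the resulting map as $\left(\mathrm{id}_{M}\times i_{M}\right)\circ\left(\mathrm{ev}_{M}\times\mathrm{id}_{1}\right)$ and again invoke $\mathrm{ev}_{M}\circ\left(\mathrm{id}_{M}\times i_{M}\right)=\mathrm{id}_{M}$, so that the outermost evaluation disappears and the transpose once more reduces to $\mathrm{ev}_{M}$.

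The two equalities for $\overline{\mathrm{ass}}_{M}$ I would handle in exactly the same fashion, the only difference being that the definition of $\overline{\mathrm{ass}}_{M}$ wraps the two evaluations in the symmetry isomorphisms $M\times M^{M}=M^{M}\times M$. These isomorphisms merely reposition the inserted $i_{M}$ into the slot at which one of the two evaluations can be collapsed by $\mathrm{ev}_{M}\circ\left(\mathrm{id}_{M}\times i_{M}\right)=\mathrm{id}_{M}$; for $\overline{\mathrm{ass}}_{M}$ the roles of inner and outer are simply swapped relative to $\mathrm{ass}_{M}$, reflecting that $\overline{\mathrm{ass}}_{M}$ is composition in the opposite order. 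Since nothing in the argument uses microlinearity or Weil exponentiability, the whole lemma is a matter of the cartesian-closed structure alone.

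Conceptually the statement asserts no more than that $\mathrm{id}_{M}$ is a two-sided unit for composition in $M^{M}$ taken in either order, so I do not anticipate a genuine obstacle. The only real work is bookkeeping, and the hard part will be tracking the associativity and symmetry isomorphisms of the finite products carefully enough that, in each of the four cases, the inserted $i_{M}$ is routed into precisely the slot where its defining identity applies.
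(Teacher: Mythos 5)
Your proof is correct, but there is nothing in the paper to compare it against: the paper states Lemma \ref{t4.3'} with no proof at all (just as it does for the companion associativity Lemma \ref{t4.2'}), evidently regarding the unit laws for composition in $M^{M}$ as self-evident consequences of the cartesian closed structure of $\mathcal{K}$. Your argument supplies exactly the missing verification, and it is sound: since transposition $\mathrm{Hom}(A,M^{M})\cong\mathrm{Hom}(M\times A,M)$ is a bijection natural in $A$, it suffices to compare transposes; the transpose of $\mathrm{id}_{M^{M}}$ is $\mathrm{ev}_{M}$; and the interchange law together with the defining property $\mathrm{ev}_{M}\circ\left(\mathrm{id}_{M}\times i_{M}\right)=\mathrm{id}_{M}$ (modulo the canonical identification $M\times1=M=1\times M$) collapses each of the four composites to $\mathrm{ev}_{M}$. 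Your closing remarks are accurate on both counts: the lemma uses neither microlinearity nor Weil exponentiability of $M$, and the only care needed is in routing $i_{M}$ through the product isomorphisms, which is precisely the bookkeeping the paper chose to suppress. One stylistic caution: the paper's own conventions are loose (it transposes $\mathrm{ass}_{M}$ over the first factor of $M\times M^{M}\times M^{M}$ but $i_{M}$ over the second factor of $1\times M$, and writes symmetry and unit isomorphisms as equalities), so your appeals to canonical identifications are at, not below, the paper's level of rigor; if you wanted a fully pedantic proof you would need to fix one convention and insert the explicit isomorphisms throughout.
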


\begin{notation}
We write $I_{M}^{n}:1\rightarrow M^{M}\otimes\mathcal{W}_{D^{n}}$ for the
morphism
\[
i_{M}\otimes\mathrm{id}_{\mathcal{W}_{D^{n}}}:1=1\otimes\mathcal{W}_{D^{n}%
}\rightarrow M^{M}\otimes\mathcal{W}_{D^{n}}%
\]

\end{notation}

\begin{proposition}
\label{t4.3}We have such identities as
\begin{equation}
\mathrm{Ass}_{M}^{m,n}\circ\left(  I_{M}^{m}\times\mathrm{id}_{M^{M}%
\otimes\mathcal{W}_{D^{n}}}\right)  =\mathrm{id}_{M^{M}}\otimes\mathcal{W}%
_{\left(  d_{1},...,d_{m},d_{m+1},...,d_{m+n}\right)  \in D^{m+n}%
\mapsto\left(  d_{m+1},...,d_{m+n}\right)  \in D^{n}}\label{4.3.0.a}%
\end{equation}
\begin{equation}
\mathrm{Ass}_{M}^{m,n}\circ\left(  \mathrm{id}_{M^{M}\otimes\mathcal{W}%
_{D^{m}}}\times I_{M}^{n}\right)  =\mathrm{id}_{M^{M}}\otimes\mathcal{W}%
_{\left(  d_{1},...,d_{m},d_{m+1},...,d_{m+n}\right)  \in D^{m+n}%
\mapsto\left(  d_{1},...,d_{m}\right)  \in D^{m}}\label{4.3.0.b}%
\end{equation}
\begin{equation}
\overline{\mathrm{Ass}}_{M}^{m,n}\circ\left(  I_{M}^{m}\times\mathrm{id}%
_{M^{M}\otimes\mathcal{W}_{D^{n}}}\right)  =\mathrm{id}_{M^{M}}\otimes
\mathcal{W}_{\left(  d_{1},...,d_{m},d_{m+1},...,d_{m+n}\right)  \in
D^{m+n}\mapsto\left(  d_{m+1},...,d_{m+n}\right)  \in D^{n}}\label{4.3.0.c}%
\end{equation}
and
\begin{equation}
\overline{\mathrm{Ass}}_{M}^{m,n}\circ\left(  \mathrm{id}_{M^{M}%
\otimes\mathcal{W}_{D^{m}}}\times I_{M}^{n}\right)  =\mathrm{id}_{M^{M}%
}\otimes\mathcal{W}_{\left(  d_{1},...,d_{m},d_{m+1},...,d_{m+n}\right)  \in
D^{m+n}\mapsto\left(  d_{1},...,d_{m}\right)  \in D^{m}}\label{4.3.0.d}%
\end{equation}

\end{proposition}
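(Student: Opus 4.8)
The plan is to establish (\ref{4.3.0.a}) in full and to obtain the remaining three identities by the same argument, mutatis mutandis. The essential mechanism is that precomposing the first tensor leg of $\mathrm{Ass}_{M}^{m,n}$ with the unit $I_{M}^{m}$ converts that leg, after the reindexing step, into the unit $I_{M}^{m+n}$, whereupon the unit law of Lemma \ref{t4.3'} for $\mathrm{ass}_{M}$ collapses the whole composite onto its surviving leg. Throughout I will write $p_{1}:D^{m+n}\rightarrow D^{m}$ and $p_{2}:D^{m+n}\rightarrow D^{n}$ for the fabulous projections $(d_{1},\dots,d_{m+n})\mapsto(d_{1},\dots,d_{m})$ and $(d_{1},\dots,d_{m+n})\mapsto(d_{m+1},\dots,d_{m+n})$, so that the right-hand sides of (\ref{4.3.0.a}) and (\ref{4.3.0.b}) are $\mathrm{id}_{M^{M}}\otimes\mathcal{W}_{p_{2}}$ and $\mathrm{id}_{M^{M}}\otimes\mathcal{W}_{p_{1}}$ respectively.

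First I would unfold the definition of $\mathrm{Ass}_{M}^{m,n}$ as the reindexing morphism $(\mathrm{id}_{M^{M}}\otimes\mathcal{W}_{p_{1}})\times(\mathrm{id}_{M^{M}}\otimes\mathcal{W}_{p_{2}})$ followed by $\mathrm{ass}_{M}\otimes\mathrm{id}_{\mathcal{W}_{D^{m+n}}}$. Precomposing with $I_{M}^{m}\times\mathrm{id}_{M^{M}\otimes\mathcal{W}_{D^{n}}}$ and distributing over the product, the second leg is left untouched and equals $\mathrm{id}_{M^{M}}\otimes\mathcal{W}_{p_{2}}$, exactly the claimed right-hand side, while the first leg becomes $(\mathrm{id}_{M^{M}}\otimes\mathcal{W}_{p_{1}})\circ(i_{M}\otimes\mathrm{id}_{\mathcal{W}_{D^{m}}})$. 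By bifunctoriality of $\otimes$ this equals $i_{M}\otimes\mathcal{W}_{p_{1}}$; and since $1\otimes\mathcal{W}_{D^{m}}$ and $1\otimes\mathcal{W}_{D^{m+n}}$ are both terminal, $\mathcal{W}_{p_{1}}$ acts as the identity there, so the first leg is simply $I_{M}^{m+n}=i_{M}\otimes\mathrm{id}_{\mathcal{W}_{D^{m+n}}}$.

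It then remains to see that applying $\mathrm{ass}_{M}\otimes\mathrm{id}_{\mathcal{W}_{D^{m+n}}}$ to the resulting morphism returns $\mathrm{id}_{M^{M}}\otimes\mathcal{W}_{p_{2}}$. For this I would apply the product-preserving functor $-\otimes\mathcal{W}_{D^{m+n}}$ to the unit law $\mathrm{ass}_{M}\circ(i_{M}\times\mathrm{id}_{M^{M}})=\mathrm{id}_{M^{M}}$ of Lemma \ref{t4.3'}, which under the identifications $(1\times M^{M})\otimes\mathcal{W}_{D^{m+n}}=M^{M}\otimes\mathcal{W}_{D^{m+n}}$ and $(i_{M}\times\mathrm{id}_{M^{M}})\otimes\mathrm{id}=I_{M}^{m+n}\times\mathrm{id}$ reads $(\mathrm{ass}_{M}\otimes\mathrm{id}_{\mathcal{W}_{D^{m+n}}})\circ(I_{M}^{m+n}\times\mathrm{id}_{M^{M}\otimes\mathcal{W}_{D^{m+n}}})=\mathrm{id}_{M^{M}\otimes\mathcal{W}_{D^{m+n}}}$. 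Since the morphism produced in the previous paragraph factors as $(I_{M}^{m+n}\times\mathrm{id}_{M^{M}\otimes\mathcal{W}_{D^{m+n}}})\circ(\mathrm{id}_{M^{M}}\otimes\mathcal{W}_{p_{2}})$, postcomposition by $\mathrm{ass}_{M}\otimes\mathrm{id}$ and the displayed identity yield precisely $\mathrm{id}_{M^{M}}\otimes\mathcal{W}_{p_{2}}$, which is (\ref{4.3.0.a}).

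Identity (\ref{4.3.0.b}) is obtained the same way with the two legs interchanged, so that the surviving leg is $\mathrm{id}_{M^{M}}\otimes\mathcal{W}_{p_{1}}$ and one invokes instead the right unit law $\mathrm{ass}_{M}\circ(\mathrm{id}_{M^{M}}\times i_{M})=\mathrm{id}_{M^{M}}$; identities (\ref{4.3.0.c}) and (\ref{4.3.0.d}) are verbatim repetitions with $\mathrm{ass}_{M}$ replaced by $\overline{\mathrm{ass}}_{M}$ and $\mathrm{Ass}_{M}^{m,n}$ by $\overline{\mathrm{Ass}}_{M}^{m,n}$, using the corresponding unit laws of Lemma \ref{t4.3'}. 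The only point demanding care, and the step I expect to be the main obstacle, is the bookkeeping that licenses the application of Lemma \ref{t4.3'} after tensoring: one must be scrupulous that $-\otimes\mathcal{W}_{D^{m+n}}$ preserves the relevant finite products and that the canonical identifications $(1\times M^{M})\otimes\mathcal{W}_{D^{m+n}}=M^{M}\otimes\mathcal{W}_{D^{m+n}}$ are compatible with the reindexing maps, so that the prolongation of the unit law genuinely absorbs the unit leg rather than merely resembling it.
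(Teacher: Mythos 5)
Your proposal is correct and is essentially the paper's own argument: the paper's proof is just the one-line remark that the identities ``follow from Lemma \ref{t4.3'} by the same token as in the proof of Proposition \ref{t4.2}'', i.e.\ unfold the definition of $\mathrm{Ass}_{M}^{m,n}$, push the product-preserving prolongation $-\otimes\mathcal{W}_{D^{m+n}}$ and the canonical identifications through, and reduce to the unit laws for $\mathrm{ass}_{M}$ and $\overline{\mathrm{ass}}_{M}$. You have simply written out in full the details the paper delegates to that cross-reference, including the bookkeeping points (terminality of $1\otimes\mathcal{W}_{D^{m}}$, compatibility of the identifications with reindexing) that the paper leaves implicit.
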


\begin{proof}
These follow from Lemma \ref{t4.3'} by the same token as in the proof
Proposition \ref{t4.2}.
\end{proof}

The following proposition is essentially a variant of Proposition 6 in
\S \S 3.2 of \cite{lav}.

\begin{proposition}
\label{t4.4}The composition of
\begin{equation}
\left(  M^{M}\otimes\mathcal{W}_{D}\right)  _{\mathrm{id}_{M}}\times\left(
M^{M}\otimes\mathcal{W}_{D}\right)  _{\mathrm{id}_{M}}\rightarrow\left(
M^{M}\otimes\mathcal{W}_{D}\right)  \times\left(  M^{M}\otimes\mathcal{W}%
_{D}\right)  \text{,}\label{4.4.1}%
\end{equation}
\begin{equation}
\mathrm{Ass}_{M}^{1,1}:\left(  M^{M}\otimes\mathcal{W}_{D}\right)
\times\left(  M^{M}\otimes\mathcal{W}_{D}\right)  \rightarrow M^{M}%
\otimes\mathcal{W}_{D^{2}}\text{,}\label{4.4.2}%
\end{equation}
and
\begin{equation}
M^{M}\otimes\mathcal{W}_{D^{2}}\rightarrow M^{M}\otimes\mathcal{W}%
_{D(2)}\label{4.4.3}%
\end{equation}
in succession is equal to the canonical injection
\begin{equation}
\left(  M^{M}\otimes\mathcal{W}_{D}\right)  _{\mathrm{id}_{M}}\times\left(
M^{M}\otimes\mathcal{W}_{D}\right)  _{\mathrm{id}_{M}}\rightarrow M^{M}%
\otimes\mathcal{W}_{D(2)}\label{4.4.4}%
\end{equation}
\ Similarly, the composition of (\ref{4.4.1}),
\begin{equation}
\overline{\mathrm{Ass}}_{M}^{1,1}:\left(  M^{M}\otimes\mathcal{W}_{D}\right)
\times\left(  M^{M}\otimes\mathcal{W}_{D}\right)  \rightarrow M^{M}%
\otimes\mathcal{W}_{D^{2}}\text{,}\label{4.4.5}%
\end{equation}
and (\ref{4.4.3}) in succession is equal to (\ref{4.4.4}).
\end{proposition}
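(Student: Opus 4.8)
The plan is to use the pullback description of $M^{M}\otimes\mathcal{W}_{D(2)}$. Applying Proposition \ref{t3.1} to $M^{M}$ (which is again microlinear, exponentials of microlinear objects being microlinear), we have $M^{M}\otimes\mathcal{W}_{D(2)}=(M^{M}\otimes\mathcal{W}_{D})\times_{M^{M}}(M^{M}\otimes\mathcal{W}_{D})$, whose two canonical projections are $\pi_{1}=\mathrm{id}_{M^{M}}\otimes\mathcal{W}_{d\in D\mapsto(d,0)\in D(2)}$ and $\pi_{2}=\mathrm{id}_{M^{M}}\otimes\mathcal{W}_{d\in D\mapsto(0,d)\in D(2)}$. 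Therefore, to prove that the composition of (\ref{4.4.1}), (\ref{4.4.2}) and (\ref{4.4.3}) equals the canonical injection (\ref{4.4.4}), it suffices to check that the two morphisms agree after postcomposition with $\pi_{1}$ and with $\pi_{2}$. For the right-hand side this is immediate from the construction of (\ref{4.4.4}): since (\ref{4.4.4}) is the comparison morphism into the fibered product determined by the two fiber inclusions of $\left(M^{M}\otimes\mathcal{W}_{D}\right)_{\mathrm{id}_{M}}$ over $i_{M}$, the composite $\pi_{i}\circ$(\ref{4.4.4}) is simply the $i$-th coordinate projection of $\left(M^{M}\otimes\mathcal{W}_{D}\right)_{\mathrm{id}_{M}}\times\left(M^{M}\otimes\mathcal{W}_{D}\right)_{\mathrm{id}_{M}}$ followed by the inclusion into $M^{M}\otimes\mathcal{W}_{D}$.

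The core is then to compute $\pi_{i}$ composed with the left-hand side. First I would compose the fabulous mappings. Since (\ref{4.4.3}) is $\mathrm{id}_{M^{M}}\otimes\mathcal{W}_{\iota}$ for the inclusion $\iota:(d_{1},d_{2})\in D(2)\mapsto(d_{1},d_{2})\in D^{2}$, functoriality of $\mathcal{W}$ gives $\pi_{1}\circ$(\ref{4.4.3})$=\mathrm{id}_{M^{M}}\otimes\mathcal{W}_{d\in D\mapsto(d,0)\in D^{2}}$ and $\pi_{2}\circ$(\ref{4.4.3})$=\mathrm{id}_{M^{M}}\otimes\mathcal{W}_{d\in D\mapsto(0,d)\in D^{2}}$. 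Next I would unfold $\mathrm{Ass}_{M}^{1,1}$ according to its defining Notation and use bifunctoriality of the Weil prolongation to slide the factor $\mathrm{ass}_{M}\otimes\mathrm{id}_{\mathcal{W}_{D^{2}}}$ past $\mathrm{id}_{M^{M}}\otimes\mathcal{W}_{d\mapsto(d,0)}$. Composing the remaining Weil-algebra maps, one reads off that the first-slot projection $(d_{1},d_{2})\mapsto d_{1}$, restricted along $d\mapsto(d,0)$, is the identity of $D$, whereas the second-slot projection $(d_{1},d_{2})\mapsto d_{2}$, restricted along $d\mapsto(d,0)$, is the constant mapping $d\mapsto0$, which factors through $1$ as $0:1\rightarrow D$ after $0_{D}:D\rightarrow1$. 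Hence on the second slot the operation becomes the zero section composed with $\tau_{M^{M}}$.

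Here is where the restriction (\ref{4.4.1}) to vectors based at $\mathrm{id}_{M}$ is decisive. Precomposing with (\ref{4.4.1}) forces, by the defining pullback of $\left(M^{M}\otimes\mathcal{W}_{D}\right)_{\mathrm{id}_{M}}$, the base point $\tau_{M^{M}}$ of the second argument to be $i_{M}$; thus the collapsed second slot is exactly $I_{M}^{1}$. Feeding $\mathrm{ass}_{M}\otimes\mathrm{id}_{\mathcal{W}_{D}}$ a pair whose second entry is $I_{M}^{1}$ and invoking Lemma \ref{t4.3'} in the form $\mathrm{ass}_{M}\circ(\mathrm{id}_{M^{M}}\times i_{M})=\mathrm{id}_{M^{M}}$ reduces the whole composite to the first projection, which matches $\pi_{1}\circ$(\ref{4.4.4}). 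The computation for $\pi_{2}$ is symmetric: now the first slot collapses to $I_{M}^{1}$ and the other identity of Lemma \ref{t4.3'}, namely $\mathrm{ass}_{M}\circ(i_{M}\times\mathrm{id}_{M^{M}})=\mathrm{id}_{M^{M}}$, yields the second projection, matching $\pi_{2}\circ$(\ref{4.4.4}). Agreement after both projections gives the claim, and the statement for $\overline{\mathrm{Ass}}_{M}^{1,1}$ follows verbatim, replacing $\mathrm{ass}_{M}$ by $\overline{\mathrm{ass}}_{M}$ and using the corresponding identities of Lemma \ref{t4.3'}.

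The step I expect to be the main obstacle is the bookkeeping that turns ''restrict the $\mathrm{Ass}$-output to one axis of $D(2)$'' into ''insert $I_{M}^{1}$ into the complementary slot'': one must carefully compose the fabulous mappings and commute $\mathrm{ass}_{M}$ past the prolongation functors, and, crucially, recognize that the identification of the collapsed slot with $I_{M}^{1}$ is legitimate only after the restriction (\ref{4.4.1}) to the fiber over $i_{M}$. Without that restriction the base point of the spectator vector would survive and the displayed equality would fail; this is precisely why the proposition speaks of $\left(M^{M}\otimes\mathcal{W}_{D}\right)_{\mathrm{id}_{M}}$ rather than of $M^{M}\otimes\mathcal{W}_{D}$.
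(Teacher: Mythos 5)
Your proposal is correct and takes essentially the same approach as the paper: the paper's own proof likewise uses the identification $M^{M}\otimes\mathcal{W}_{D(2)}=(M^{M}\otimes\mathcal{W}_{D})\times_{M^{M}}(M^{M}\otimes\mathcal{W}_{D})$ to reduce the claim to checking that the composites with $\mathrm{id}_{M^{M}}\otimes\mathcal{W}_{d\in D\mapsto\left(  d,0\right)  \in D(2)}$ and $\mathrm{id}_{M^{M}}\otimes\mathcal{W}_{d\in D\mapsto\left(  0,d\right)  \in D(2)}$ equal the corresponding factor projections followed by the canonical injection. The only difference is one of completeness: the paper leaves those componentwise computations to the reader, whereas you carry them out explicitly via functoriality of $\otimes$ and Lemma \ref{t4.3'}.
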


\begin{proof}
Here we deal only with the former, leaving a similar treatment of the latter
to the reader. It suffices to show that the composition of (\ref{4.4.1}%
)-(\ref{4.4.3}) and
\[
\mathrm{id}_{M^{M}}\otimes\mathcal{W}_{d\in D\mapsto\left(  d,0\right)  \in
D(2)}:M^{M}\otimes\mathcal{W}_{D(2)}\rightarrow M^{M}\otimes\mathcal{W}_{D}%
\]
in succession is equal to the composition of the projection to the first
factor
\[
\left(  M^{M}\otimes\mathcal{W}_{D}\right)  _{\mathrm{id}_{M}}\times\left(
M^{M}\otimes\mathcal{W}_{D}\right)  _{\mathrm{id}_{M}}\rightarrow\left(
M^{M}\otimes\mathcal{W}_{D}\right)  _{\mathrm{id}_{M}}%
\]
followed by the canonical injection
\[
\left(  M^{M}\otimes\mathcal{W}_{D}\right)  _{\mathrm{id}_{M}}\rightarrow
M^{M}\otimes\mathcal{W}_{D}%
\]
while the composition of (\ref{4.4.1})-(\ref{4.4.3}) and
\[
\mathrm{id}_{M^{M}}\otimes\mathcal{W}_{d\in D\mapsto\left(  0,d\right)  \in
D(2)}:M^{M}\otimes\mathcal{W}_{D(2)}\rightarrow M^{M}\otimes\mathcal{W}_{D}%
\]
in succession is equal to the composition of the projection of the second
factor
\[
\left(  M^{M}\otimes\mathcal{W}_{D}\right)  _{\mathrm{id}_{M}}\times\left(
M^{M}\otimes\mathcal{W}_{D}\right)  _{\mathrm{id}_{M}}\rightarrow\left(
M^{M}\otimes\mathcal{W}_{D}\right)  _{\mathrm{id}_{M}}%
\]
followed by the canonical injection
\[
\left(  M^{M}\otimes\mathcal{W}_{D}\right)  _{\mathrm{id}_{M}}\rightarrow
M^{M}\otimes\mathcal{W}_{D}\text{,}%
\]
the details of which are left to the reader.
\end{proof}

\begin{corollary}
\label{t4.4'}The composition of (\ref{4.4.1})-(\ref{4.4.3}) followed by
\begin{equation}
\mathrm{id}_{M^{M}}\otimes\mathcal{W}_{d\in D\mapsto\left(  d,d\right)  \in
D(2)}:M^{M}\otimes\mathcal{W}_{D(2)}\rightarrow M^{M}\otimes\mathcal{W}%
_{D}\label{4.4'.1}%
\end{equation}
is factored uniquely into
\begin{equation}
\left(  M^{M}\otimes\mathcal{W}_{D}\right)  _{\mathrm{id}_{M}}\times\left(
M^{M}\otimes\mathcal{W}_{D}\right)  _{\mathrm{id}_{M}}\rightarrow\left(
M^{M}\otimes\mathcal{W}_{D}\right)  _{\mathrm{id}_{M}}\label{4.4'.2}%
\end{equation}
followed by the canonical morphism
\begin{equation}
\left(  M^{M}\otimes\mathcal{W}_{D}\right)  _{\mathrm{id}_{M}}\rightarrow
M^{M}\otimes\mathcal{W}_{D}\label{4.4'.3}%
\end{equation}
The arrow in (\ref{4.4'.2}) stands for the addition of the $k$-module $\left(
M^{M}\otimes\mathcal{W}_{D}\right)  _{\mathrm{id}_{M}}$.
\end{corollary}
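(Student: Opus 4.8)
The plan is to reduce everything to Proposition \ref{t4.4}, which already asserts that the composition of (\ref{4.4.1})--(\ref{4.4.3}) is the canonical injection (\ref{4.4.4}). Granting this, the only remaining task is to understand the post-composition of that injection with the diagonal-induced arrow (\ref{4.4'.1}), namely $\mathrm{id}_{M^{M}}\otimes\mathcal{W}_{d\in D\mapsto(d,d)\in D(2)}$, and to recognise the outcome as the addition of the tangent $k$-module at $\mathrm{id}_{M}$.

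First I would apply Proposition \ref{t3.1}, together with its Corollary, to the microlinear object $M^{M}$ in place of $M$; this yields the identification $M^{M}\otimes\mathcal{W}_{D(2)}=(M^{M}\otimes\mathcal{W}_{D})\times_{M^{M}}(M^{M}\otimes\mathcal{W}_{D})$ in the slice category $\mathcal{K}/M^{M}$. Under this identification the canonical injection (\ref{4.4.4}) is exactly the inclusion of $(M^{M}\otimes\mathcal{W}_{D})_{\mathrm{id}_{M}}\times(M^{M}\otimes\mathcal{W}_{D})_{\mathrm{id}_{M}}$ as the sub-object of $M^{M}\otimes\mathcal{W}_{D(2)}$ consisting of those pairs of tangent vectors that both lie in the fibre over the base point $i_{M}:1\rightarrow M^{M}$.

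Next I would recall, from the construction of the $k$-module structure preceding Theorem \ref{t3.2} applied now to $M^{M}$, that the arrow $\mathrm{id}_{M^{M}}\otimes\mathcal{W}_{+_{D}}$ with $+_{D}:d\mapsto(d,d)$ is by definition the addition of the $k$-module $\tau_{\mathcal{W}_{D}}(M^{M}):M^{M}\otimes\mathcal{W}_{D}\rightarrow M^{M}$ in $\mathcal{K}/M^{M}$; this is precisely the morphism (\ref{4.4'.1}). Because addition is a morphism of the slice category $\mathcal{K}/M^{M}$, it commutes with the projection to $M^{M}$, so whenever both summands project to $i_{M}$ their sum does as well. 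Hence the composite of (\ref{4.4.1})--(\ref{4.4.3}) followed by (\ref{4.4'.1}) lands in the fibre over $i_{M}$ and therefore factors through the canonical morphism (\ref{4.4'.3}). The factorisation is unique because (\ref{4.4'.3}), being the pullback of the point $i_{M}$ along $\tau_{\mathcal{W}_{D}}(M^{M})$ and hence the pullback of a monomorphism, is itself monic.

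To finish, I would identify the factored arrow (\ref{4.4'.2}) with the addition of $(M^{M}\otimes\mathcal{W}_{D})_{\mathrm{id}_{M}}$ by invoking the Remark following Theorem \ref{t3.2}: the $k$-module structure on $(M^{M}\otimes\mathcal{W}_{D})_{\mathrm{id}_{M}}$ is the image of the $k$-module $\tau_{\mathcal{W}_{D}}(M^{M})$ under the left-exact pullback functor $i_{M}^{\ast}$, which transports the addition $\mathrm{id}_{M^{M}}\otimes\mathcal{W}_{+_{D}}$ precisely to the fibrewise sum. Thus the factored arrow computes the addition of the tangent $k$-module at $\mathrm{id}_{M}$, as claimed. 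The main obstacle will be the bookkeeping underlying the third paragraph: one must verify carefully that, under the identification coming from Proposition \ref{t3.1} for $M^{M}$, the abstract slice-category addition really is the concrete arrow (\ref{4.4'.1}) induced by $d\mapsto(d,d)$, and that its restriction along $i_{M}$ is compatible with the canonical injections on both source and target. Everything else is a formal consequence of Proposition \ref{t4.4} and the universal property of the pullback.
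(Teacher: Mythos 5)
Your proof is correct and takes essentially the same approach as the paper: the Corollary is stated there without any written proof, as an immediate consequence of Proposition \ref{t4.4}, and your argument (identifying (\ref{4.4'.1}) as the slice-category addition over $M^{M}$ coming from the module structure of Theorem \ref{t3.2} applied to $M^{M}$, factoring through the fibre over $i_{M}$ by the universal property of the pullback, and recognising the factored arrow as the fibre addition via the pullback functor $i_{M}^{\ast}$ of the Remark following Theorem \ref{t3.2}) is precisely the justification the paper leaves implicit. Your uniqueness step is also sound, since $i_{M}:1\rightarrow M^{M}$ is monic (any morphism out of a terminal object is), hence so is its pullback (\ref{4.4'.3}).
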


\begin{lemma}
\label{t4.5'}The diagram
\[%
\begin{array}
[c]{ccccc}%
\mathcal{W}_{D} & \rightarrow & \mathcal{W}_{D^{2}} &
\begin{array}
[c]{c}%
\rightarrow\\
\rightarrow\\
\rightarrow
\end{array}
& \mathcal{W}_{D}%
\end{array}
\]
is a limit diagram in $\mathcal{W}eil_{k}$, where the left horizontal arrow
is
\[
\mathcal{W}_{\left(  d_{1},d_{2}\right)  \in D^{2}\mapsto d_{1}d_{2}\in D}%
\]
while the three right horizontal arrows are
\[
\mathcal{W}_{d\in D\mapsto\left(  d,0\right)  \in D^{2}}%
\]
\[
\mathcal{W}_{d\in D\mapsto\left(  0,d\right)  \in D^{2}}%
\]
\[
\mathcal{W}_{d\in D\mapsto\left(  0,0\right)  \in D^{2}}%
\]

\end{lemma}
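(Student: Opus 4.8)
The plan is to reduce the statement to an explicit computation in the ambient category of $k$-algebras, using that $\mathbf{Weil}_{k}$ is a full subcategory of it. First I would fix concrete presentations $\mathcal{W}_{D}\cong k[\varepsilon]/(\varepsilon^{2})$, with $k$-basis $\{1,\varepsilon\}$, and $\mathcal{W}_{D^{2}}=\mathcal{W}_{D}\otimes_{k}\mathcal{W}_{D}\cong k[\varepsilon_{1},\varepsilon_{2}]/(\varepsilon_{1}^{2},\varepsilon_{2}^{2})$, with basis $\{1,\varepsilon_{1},\varepsilon_{2},\varepsilon_{1}\varepsilon_{2}\}$. Then I would translate each fabulous mapping into its induced algebra homomorphism by pulling back coordinate functions, keeping in mind that $\mathcal{W}$ is contravariant: the left arrow $\phi:=\mathcal{W}_{(d_{1},d_{2})\mapsto d_{1}d_{2}}$ sends $\varepsilon\mapsto\varepsilon_{1}\varepsilon_{2}$, whereas the three right arrows $\psi_{1},\psi_{2},\psi_{0}$, induced respectively by $d\mapsto(d,0)$, $d\mapsto(0,d)$ and $d\mapsto(0,0)$, send a general element $a+b\varepsilon_{1}+c\varepsilon_{2}+e\varepsilon_{1}\varepsilon_{2}$ to $a+b\varepsilon$, $a+c\varepsilon$ and $a$ respectively.

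With this dictionary in hand, the assertion is precisely that $(\mathcal{W}_{D},\phi)$ is the equalizer of the three parallel arrows $\psi_{1},\psi_{2},\psi_{0}:\mathcal{W}_{D^{2}}\to\mathcal{W}_{D}$. I would check this in two steps. First, $\phi$ is a cone: since $\psi_{j}(\varepsilon_{1}\varepsilon_{2})=0$ for each $j$, each composite $\psi_{j}\circ\phi$ carries $\varepsilon$ to $0$, so the three composites coincide. Second, the universal property: the equalizer of $\psi_{1},\psi_{2},\psi_{0}$ in $k$-algebras is the subalgebra $E=\{b:\psi_{1}(b)=\psi_{2}(b)=\psi_{0}(b)\}$, and comparing coefficients shows that $\psi_{1}(b)=\psi_{0}(b)$ and $\psi_{2}(b)=\psi_{0}(b)$ force the $\varepsilon_{1}$- and $\varepsilon_{2}$-coefficients of $b$ to vanish, so that $E=k\oplus k\,\varepsilon_{1}\varepsilon_{2}$. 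Because $(\varepsilon_{1}\varepsilon_{2})^{2}=0$, the map $\phi$ is injective with image exactly $E$, hence an isomorphism onto $E$; consequently any homomorphism $h:T\to\mathcal{W}_{D^{2}}$ that equalizes the three arrows lands in $E$ and factors uniquely through $\phi$.

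It then remains to pass from $k$-algebras to $\mathbf{Weil}_{k}$. Here I would invoke the general fact that a limit cone computed in a category is inherited by any full subcategory containing the apex and all the objects of the diagram, since the universal property is tested only against objects of that subcategory and the relevant hom-sets agree. As $\mathcal{W}_{D}$ and $\mathcal{W}_{D^{2}}$ are Weil algebras, $(\mathcal{W}_{D},\phi)$ is therefore the limit in $\mathbf{Weil}_{k}$ as claimed. Dually, and this is the conceptual content, the statement expresses that in the category of infinitesimal objects in the shade the multiplication map $\mu:D^{2}\to D$, $(d_{1},d_{2})\mapsto d_{1}d_{2}$, is the coequalizer of the two coordinate inclusions together with the constant map $D\rightrightarrows D^{2}$.

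I expect the only real difficulty to be bookkeeping. The point on which everything turns is getting the contravariance right, so that $\phi$ genuinely lands on the product term $\varepsilon_{1}\varepsilon_{2}$ rather than on some linear combination of $\varepsilon_{1}$ and $\varepsilon_{2}$; it is exactly the vanishing of the two linear terms, enforced by the third (constant) arrow, that collapses the would-be equalizer onto a two-dimensional subalgebra isomorphic to $\mathcal{W}_{D}$. Were one to retain only the arrows $d\mapsto(d,0)$ and $d\mapsto(0,d)$, the equalizer would be the three-dimensional algebra spanned by $1$, $\varepsilon_{1}+\varepsilon_{2}$ and $\varepsilon_{1}\varepsilon_{2}$, so the presence of the constant arrow must be tracked with care.
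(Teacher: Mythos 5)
Your proposal is correct, and it is genuinely more self-contained than what the paper does: the paper gives no argument at all for this lemma, merely citing Proposition 7 in \S 2.2 of Lavendhomme's book, where the synthetic counterpart of the statement is proved internally (a quasi-colimit of small objects, perceived as a limit via the Kock--Lawvere axiom). Your route transposes that content into an honest computation in the category of $k$-algebras: the dictionary $\phi(\varepsilon)=\varepsilon_{1}\varepsilon_{2}$ and $\psi_{1},\psi_{2},\psi_{0}$ acting on $a+b\varepsilon_{1}+c\varepsilon_{2}+e\varepsilon_{1}\varepsilon_{2}$ as $a+b\varepsilon$, $a+c\varepsilon$, $a$ is the correct reading of the contravariant functor $\mathcal{W}$; the joint equalizer of three parallel arrows is indeed the limit of the displayed diagram shape and is computed on underlying sets, giving $E=k\oplus k\,\varepsilon_{1}\varepsilon_{2}$; and $\phi$ is an isomorphism onto $E$, so the cone is universal. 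The descent from $k$-algebras to $\mathbf{Weil}_{k}$ by fullness is also sound; the only point worth making explicit is why $\mathbf{Weil}_{k}$ is full, namely that a $k$-algebra homomorphism between Weil algebras automatically preserves augmentations (nilpotents go to nilpotents, and over a field or reduced ring the augmentation ideal is the nilradical) --- alternatively, one can observe directly that your unique factorization preserves augmentations, so nothing is lost under the stricter notion of morphism. What your approach buys is a proof of the lemma exactly as stated, as a limit in $\mathbf{Weil}_{k}$, rather than a pointer to its synthetic avatar that the reader must translate; and your closing observation that the constant arrow $d\mapsto(0,0)$ is indispensable (without it the equalizer grows to the three-dimensional span of $1$, $\varepsilon_{1}+\varepsilon_{2}$, $\varepsilon_{1}\varepsilon_{2}$) is correct and pinpoints exactly what the third arrow is for.
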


\begin{proof}
The reader is referred to Proposition 7 in \S 2.2 of \cite{lav}.
\end{proof}

\begin{theorem}
\label{t4.5}The diagram
\begin{equation}%
\begin{array}
[c]{ccccc}%
M^{M}\otimes\mathcal{W}_{D} & \rightarrow & M^{M}\otimes\mathcal{W}_{D^{2}} &
\begin{array}
[c]{c}%
\rightarrow\\
\rightarrow\\
\rightarrow
\end{array}
& M^{M}\otimes\mathcal{W}_{D}\\
&  & \uparrow &  & \\
&  & \left(  M^{M}\otimes\mathcal{W}_{D}\right)  _{\mathrm{id}_{M}}%
\times\left(  M^{M}\otimes\mathcal{W}_{D}\right)  _{\mathrm{id}_{M}} &  &
\end{array}
\label{4.5.1}%
\end{equation}
is commutative, where the left horirontal arrow is
\begin{equation}
\mathrm{id}_{M^{M}}\otimes\mathcal{W}_{\left(  d_{1},d_{2}\right)  \in
D^{2}\mapsto d_{1}d_{2}\in D}\label{4.5.2}%
\end{equation}
the three right horizontal arrows are
\begin{equation}
\mathrm{id}_{M^{M}}\otimes\mathcal{W}_{d\in D\mapsto\left(  d,0\right)  \in
D^{2}}\label{4.5.3}%
\end{equation}
\begin{equation}
\mathrm{id}_{M^{M}}\otimes\mathcal{W}_{d\in D\mapsto\left(  0,d\right)  \in
D^{2}}\label{4.5.4}%
\end{equation}
\begin{equation}
\mathrm{id}_{M^{M}}\otimes\mathcal{W}_{d\in D\mapsto\left(  0,0\right)  \in
D^{2}}\label{4.5.5}%
\end{equation}
and the vertical arrow
\begin{equation}
\left(  M^{M}\otimes\mathcal{W}_{D}\right)  _{\mathrm{id}_{M}}\times\left(
M^{M}\otimes\mathcal{W}_{D}\right)  _{\mathrm{id}_{M}}\rightarrow M^{M}%
\otimes\mathcal{W}_{D^{2}}\label{4.5.6}%
\end{equation}
is the composition of
\begin{equation}
\left(  M^{M}\otimes\mathcal{W}_{D}\right)  _{\mathrm{id}_{M}}\times\left(
M^{M}\otimes\mathcal{W}_{D}\right)  _{\mathrm{id}_{M}}\rightarrow\left(
M^{M}\otimes\mathcal{W}_{D}\right)  \times\left(  M^{M}\otimes\mathcal{W}%
_{D}\right) \label{4.5.7}%
\end{equation}
\begin{align}
& <pr_{1},pr_{2},pr_{1},pr_{2}>:\left(  M^{M}\otimes\mathcal{W}_{D}\right)
\times\left(  M^{M}\otimes\mathcal{W}_{D}\right) \nonumber\\
& \rightarrow\left(  M^{M}\otimes\mathcal{W}_{D}\right)  \times\left(
M^{M}\otimes\mathcal{W}_{D}\right)  \times\left(  M^{M}\otimes\mathcal{W}%
_{D}\right)  \times\left(  M^{M}\otimes\mathcal{W}_{D}\right) \label{4.5.8}%
\end{align}
\begin{align}
\mathrm{Ass}_{M}^{1,1,1,1}  & :\left(  M^{M}\otimes\mathcal{W}_{D}\right)
\times\left(  M^{M}\otimes\mathcal{W}_{D}\right)  \times\left(  M^{M}%
\otimes\mathcal{W}_{D}\right)  \times\left(  M^{M}\otimes\mathcal{W}%
_{D}\right) \nonumber\\
& \rightarrow M^{M}\otimes\mathcal{W}_{D^{4}}\label{4.5.9}%
\end{align}
\begin{equation}
\mathrm{id}_{M^{M}}\otimes\mathcal{W}_{\left(  d_{1},d_{2}\right)  \in
D^{2}\mapsto\left(  d_{1},d_{2},-d_{1},-d_{2}\right)  \in D^{4}}:M^{M}%
\otimes\mathcal{W}_{D^{4}}\rightarrow M^{M}\otimes\mathcal{W}_{D^{2}%
}\label{4.5.10}%
\end{equation}
in succession. Therefore, there exists a unique arrow
\begin{equation}
\left(  M^{M}\otimes\mathcal{W}_{D}\right)  _{\mathrm{id}_{M}}\times\left(
M^{M}\otimes\mathcal{W}_{D}\right)  _{\mathrm{id}_{M}}\rightarrow M^{M}%
\otimes\mathcal{W}_{D}\label{4.5.11}%
\end{equation}
making the triangle
\begin{equation}%
\begin{array}
[c]{ccc}%
M^{M}\otimes\mathcal{W}_{D} & \rightarrow & M^{M}\otimes\mathcal{W}_{D^{2}}\\
& \nwarrow & \uparrow\\
&  & \left(  M^{M}\otimes\mathcal{W}_{D}\right)  _{\mathrm{id}_{M}}%
\times\left(  M^{M}\otimes\mathcal{W}_{D}\right)  _{\mathrm{id}_{M}}%
\end{array}
\label{4.5.12}%
\end{equation}
commutative. Furthermore, the arrow in (\ref{4.5.11})\ is factored uniquely
into
\begin{equation}
\left(  M^{M}\otimes\mathcal{W}_{D}\right)  _{\mathrm{id}_{M}}\times\left(
M^{M}\otimes\mathcal{W}_{D}\right)  _{\mathrm{id}_{M}}\rightarrow\left(
M^{M}\otimes\mathcal{W}_{D}\right)  _{\mathrm{id}_{M}}\label{4.5.13}%
\end{equation}
followed by
\begin{equation}
\left(  M^{M}\otimes\mathcal{W}_{D}\right)  _{\mathrm{id}_{M}}\rightarrow
M^{M}\otimes\mathcal{W}_{D}\label{4.5.14}%
\end{equation}

\end{theorem}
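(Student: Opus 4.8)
The plan is to read the top row of (\ref{4.5.1}) as a joint equalizer and to exhibit the vertical arrow as a cone over it, after which the universal property delivers (\ref{4.5.11}) for free. By Lemma \ref{t4.5'} the fork $\mathcal{W}_{D}\rightarrow\mathcal{W}_{D^{2}}$ with the three parallel arrows to $\mathcal{W}_{D}$ is a limit diagram in $\mathbf{Weil}_{k}$; since $M$, and hence $M^{M}$ (microlinearity being inherited by exponentials), is microlinear, the Weil prolongation $M^{M}\otimes(-)$ carries this to a limit diagram in $\mathcal{K}$. Thus the left horizontal arrow (\ref{4.5.2}) presents $M^{M}\otimes\mathcal{W}_{D}$ as the equalizer of the three parallel arrows (\ref{4.5.3})--(\ref{4.5.5}), and ``commutativity of (\ref{4.5.1})'' means precisely that the vertical arrow (\ref{4.5.6}) equalizes them. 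So the whole first half of the theorem reduces to checking one identity, and (\ref{4.5.11}) together with the commuting triangle (\ref{4.5.12}) is then immediate from the universal property.

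First I would compute the three composites of (\ref{4.5.6}) with (\ref{4.5.3}), (\ref{4.5.4}), (\ref{4.5.5}). Writing, heuristically, the two inputs as one-parameter families $X$ and $Y$ based at $\mathrm{id}_{M}$, the morphisms (\ref{4.5.7})--(\ref{4.5.9}) assemble the four-fold composite $\mathrm{Ass}_{M}^{1,1,1,1}$ applied to $(X,Y,X,Y)$ in $M^{M}\otimes\mathcal{W}_{D^{4}}$, and (\ref{4.5.10}) restricts its four parameters along $(d_{1},d_{2})\mapsto(d_{1},d_{2},-d_{1},-d_{2})$. Post-composing with (\ref{4.5.3}) substitutes $(d,0)$, driving the four parameters to $(d,0,-d,0)$; (\ref{4.5.4}) gives $(0,d,0,-d)$; and (\ref{4.5.5}) gives $(0,0,0,0)$. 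In every case two of the four slots receive the parameter $0$. Because $X$ and $Y$ lie in the tangent spaces at $\mathrm{id}_{M}$, setting a slot's parameter to $0$ replaces that factor by $i_{M}$, so invoking Proposition \ref{t4.3} (and regrouping by the associativity of Proposition \ref{t4.2}) lets me delete those identity slots and collapse $\mathrm{Ass}_{M}^{1,1,1,1}$ to $\mathrm{Ass}_{M}^{1,1}$ on the surviving pair: the two copies of $X$ with parameters $d$ and $-d$ for (\ref{4.5.3}), the two copies of $Y$ for (\ref{4.5.4}), and nothing for (\ref{4.5.5}).

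It then remains to identify $\mathrm{Ass}_{M}^{1,1}(X,X)$ restricted along $d\mapsto(d,-d)$ with the constant $i_{M}$. Here I would use Corollary \ref{t4.4'}: this product is the image under the addition of the $k$-module $\left(M^{M}\otimes\mathcal{W}_{D}\right)_{\mathrm{id}_{M}}$ of the pair $(X,-X)$, hence the zero of that module (using the inverse law of Theorem \ref{t3.2}), whose canonical image in $M^{M}\otimes\mathcal{W}_{D}$ is the constant $i_{M}$; the same computation disposes of $Y$. Consequently all three composites coincide with the constant morphism through $I_{M}^{1}$, so (\ref{4.5.6}) is a cone and (\ref{4.5.11}) exists uniquely. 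Finally, to factor (\ref{4.5.11}) through the tangent space at $\mathrm{id}_{M}$ I would compose it with $\tau_{\mathcal{W}_{D}}\left(M^{M}\right)$. Since the fabulous map $(d_{1},d_{2})\mapsto d_{1}d_{2}$ carries the basepoint of $D$ to that of $D^{2}$, one has $\tau_{\mathcal{W}_{D}}\left(M^{M}\right)=\tau_{\mathcal{W}_{D^{2}}}\left(M^{M}\right)\circ(\ref{4.5.2})$; combining this with triangle (\ref{4.5.12}) shows the composite equals $\tau_{\mathcal{W}_{D^{2}}}\left(M^{M}\right)\circ(\ref{4.5.6})$, which by the $(0,0)$-computation above is the constant $i_{M}$. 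By the pullback defining $\left(M^{M}\otimes\mathcal{W}_{D}\right)_{\mathrm{id}_{M}}$, this is exactly the condition that (\ref{4.5.11}) factors through (\ref{4.5.14}), producing (\ref{4.5.13}).

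The step I expect to be the genuine obstacle is the commutativity, and within it the bookkeeping that turns the four-parameter $\mathrm{Ass}_{M}^{1,1,1,1}$ into a product of two mutually inverse one-parameter group elements. One must be careful that deleting a parameter-zero slot via Proposition \ref{t4.3} is legitimate only because the corresponding input is based at $\mathrm{id}_{M}$, and that the sign in $d\mapsto(d,-d)$ is precisely what the inverse law of Theorem \ref{t3.2}, transported through Corollary \ref{t4.4'}, consumes. Everything else is the formal equalizer argument, which becomes automatic once microlinearity has been applied to Lemma \ref{t4.5'}.
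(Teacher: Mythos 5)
Your proposal is correct and takes essentially the same route as the paper's own proof: establish that the vertical arrow (\ref{4.5.6}) composed with each of the three arrows (\ref{4.5.3})--(\ref{4.5.5}) gives one and the same morphism (the cone condition), invoke Lemma \ref{t4.5'} together with microlinearity to get the unique factorization (\ref{4.5.11})--(\ref{4.5.12}), and then verify the factorization through the tangent space at $\mathrm{id}_{M}$. The paper dismisses all three steps as ``easy to see'' or ``easy to verify,'' and your computations --- deleting zero-parameter slots via Proposition \ref{t4.3}, reducing to $X+(-X)=0$ via Corollary \ref{t4.4'} and Theorem \ref{t3.2}, and checking the basepoint condition for the pullback --- supply precisely the details the paper omits.
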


\begin{proof}
It is easy to see that the composition of the arrow in (\ref{4.5.6}) and any
one of the three arrows in (\ref{4.5.3})-(\ref{4.5.5}) results in the same
arrow. Therefore the second statement follows directly from Lemma \ref{t4.5'}.
The last statement is easy to verify.
\end{proof}

\begin{notation}
The arrow in (\ref{4.5.13})\ is denoted by $L_{M}$.
\end{notation}

\begin{theorem}
\label{t4.6}The $k$-module $\left(  M^{M}\otimes\mathcal{W}_{D}\right)
_{\mathrm{id}_{M}}$ endowed with $L_{M}$\ as a Lie bracket is a Lie
$k$-algebra object in the category $\mathcal{K}$
\end{theorem}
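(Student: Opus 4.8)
The plan is to verify that $L_{M}$ satisfies the three defining properties of a Lie bracket over $k$ on the $k$-module $\left(M^{M}\otimes\mathcal{W}_{D}\right)_{\mathrm{id}_{M}}$, whose module structure is the one it carries (via Theorem \ref{t4.1} and Theorem \ref{t3.2}) as the tangent space to $M^{M}$ at $\mathrm{id}_{M}$. Concretely I must show that $L_{M}$ is $k$-bilinear, that it is alternating (i.e.\ $L_{M}\circ\Delta=0$ for the diagonal $\Delta$, from which antisymmetry follows), and that it obeys the Jacobi identity. The organizing principle is that every morphism occurring in these statements is assembled out of the composition $\mathrm{ass}_{M}$ on $M^{M}$, its companions $\mathrm{Ass}_{M}^{m,n}$, the unit $i_{M}$, and fabulous mappings $\mathcal{W}_{\varphi}$ between infinitesimal objects. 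Hence each axiom, once written as an equality of morphisms into $M^{M}\otimes\mathcal{W}_{D}$, can be pushed past these building blocks using the associativity laws of Proposition \ref{t4.2}, the unit laws of Proposition \ref{t4.3}, and the identification of Corollary \ref{t4.4'}, until it collapses to a commuting diagram of fabulous mappings that I check ``in the shade'' by an elementwise nilpotent computation. The essential enabler is microlinearity: exactly as in the construction of $L_{M}$ in Theorem \ref{t4.5}, it guarantees that the relevant morphisms out of $M^{M}\otimes\mathcal{W}_{D^{2}}$ factor uniquely through the limit diagram of Lemma \ref{t4.5'}, so identities need only be tested after the multiplication $\left(d_{1},d_{2}\right)\mapsto d_{1}d_{2}$ and the three degeneracies.

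For $k$-bilinearity I would separate homogeneity from additivity. Scalar homogeneity is the easy half: scaling the first argument by $\xi\in k$ amounts to precomposing the two $X$-slots of the construction with the fabulous mapping $d\mapsto\xi d$, and carrying this scaling through $\left(d_{1},d_{2}\right)\mapsto\left(d_{1},d_{2},-d_{1},-d_{2}\right)$ and then through multiplication produces $\left(\xi d_{1}\right)d_{2}=\xi\left(d_{1}d_{2}\right)$; combined with the scalar action $Z\!\left(\xi e\right)=\xi Z\!\left(e\right)$ of Theorem \ref{t3.2} on the tangent space this gives $L_{M}(\xi X,Y)=\xi L_{M}(X,Y)$, and symmetrically in the second slot. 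Additivity is the substantive half: I would express $X+X'$ by means of the fabulous mapping $+_{D}:d\mapsto\left(d,d\right)$ as in the module structure, feed it into the defining construction of Theorem \ref{t4.5}, and use Proposition \ref{t4.2} together with Corollary \ref{t4.4'} to split the resulting morphism into the $\mathrm{Ass}$-composite for $(X,Y)$ and that for $(X',Y)$; microlinearity then identifies the sum of the two brackets after factoring through $d_{1}d_{2}$.

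The alternating law and antisymmetry rest on the group-theoretic features of the commutator word $\left(d_{1},d_{2}\right)\mapsto\left(d_{1},d_{2},-d_{1},-d_{2}\right)$. When all four slots carry a single field $X$, the self-commutativity of $X$ under $\mathrm{ass}_{M}$ — itself a consequence of the commutativity of the addition established in Theorem \ref{t3.2} together with the associativity of Lemma \ref{t4.2'} — makes the commutator word degenerate through the unit $i_{M}$ (Lemma \ref{t4.3'}), so that the unique factorization forces $L_{M}\circ\Delta=0$. Antisymmetry $L_{M}(X,Y)=-L_{M}(Y,X)$ then follows because interchanging the two fields replaces the commutator word by its inverse, realized by the swap $\left(d_{1},d_{2}\right)\mapsto\left(d_{2},d_{1}\right)$, while the target multiplication $\left(d_{1},d_{2}\right)\mapsto d_{1}d_{2}$ is symmetric.

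The main obstacle, as in every treatment of this theorem, is the Jacobi identity. Here I would pass to $M^{M}\otimes\mathcal{W}_{D^{3}}$, assemble the three cyclic commutator-of-commutator morphisms by iterating $\mathrm{Ass}_{M}^{m,n}$ (legitimate by repeated use of Proposition \ref{t4.2}, just as $\mathrm{Ass}_{M}^{1,1,1}$ was defined), and reduce the vanishing of their sum to the Hall--Witt identity for the commutator word, realized as a single commuting diagram of fabulous mappings among powers of $D$. The delicate point is arranging this large diagram so that microlinearity supplies the unique factorization through the appropriate multiplication map, after which the residual identity among fabulous mappings is a routine nilpotent computation. I expect the bookkeeping of this Jacobi diagram to absorb the bulk of the work, whereas the reductions for bilinearity and antisymmetry, though lengthy, are mechanical once the strategy above is in place.
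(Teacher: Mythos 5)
Your overall architecture --- reduce each Lie--algebra axiom, via Proposition \ref{t4.2}, Proposition \ref{t4.3}, Corollary \ref{t4.4'} and the unique factorization through the limit diagram of Lemma \ref{t4.5'}, to identities among fabulous mappings --- is the right kind of plan, and indeed it attempts more than the paper itself does: the paper's ``proof'' of Theorem \ref{t4.6} is a deferral, saying the argument can be obtained by reformulating Sections 5 and 6 of \cite{nishi1}, with the Jacobi identity relegated to the separate paper \cite{nishi3}. Measured as a self-contained argument, however, your outline has a genuine gap at its central step, the alternating law. You derive the self-commutation of a single field $X$ under $\mathrm{ass}_{M}$ from ``the commutativity of the addition established in Theorem \ref{t3.2} together with the associativity of Lemma \ref{t4.2'}''. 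These facts cannot yield it. Theorem \ref{t3.2} concerns the fibrewise addition of the $k$-module $\tau_{\mathcal{W}_{D}}(M)$, an operation entirely different from the (noncommutative) composition monoid $\mathrm{ass}_{M}$ on $M^{M}$, and associativity does not convert commutativity of one operation into commutativity of the other. The only commutation statement that does follow from the module-theoretic results of the paper is Proposition \ref{t4.4}: $X_{d_{1}}\circ Y_{d_{2}}=Y_{d_{2}}\circ X_{d_{1}}$ \emph{after restriction along} $M^{M}\otimes\mathcal{W}_{D^{2}}\rightarrow M^{M}\otimes\mathcal{W}_{D(2)}$, i.e.\ only on $D(2)$, where $d_{1}d_{2}=0$. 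But $L_{M}$ is by construction detected exactly by the factor $\left(  d_{1},d_{2}\right)  \mapsto d_{1}d_{2}$ (the left horizontal arrow of Theorem \ref{t4.5}), which is killed on $D(2)$; so commutation on $D(2)$ gives no information about the bracket. What you actually need is self-commutation on all of $D^{2}$: the morphism obtained by feeding $(X,X)$ into $\mathrm{Ass}_{M}^{1,1}$ must factor through $\mathrm{id}_{M^{M}}\otimes\mathcal{W}_{\left(  d_{1},d_{2}\right)  \in D^{2}\mapsto d_{1}+d_{2}\in D_{2}}:M^{M}\otimes\mathcal{W}_{D_{2}}\rightarrow M^{M}\otimes\mathcal{W}_{D^{2}}$, whence symmetry of $d_{1}+d_{2}$ gives $X_{d_{1}}\circ X_{d_{2}}=X_{d_{2}}\circ X_{d_{1}}$ and then $L_{M}\circ\Delta=0$. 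That factorization is the Weil-algebraic counterpart of the flow lemma in \S 3.2 of \cite{lav}; it requires a new microlinearity argument (a limit diagram comparing $\mathcal{W}_{D_{2}}$ with $\mathcal{W}_{D^{2}}$, analogous to but distinct from Lemma \ref{t4.5'}) which appears nowhere in your outline. Without it, neither the alternating law nor antisymmetry is proved.

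The same kind of missing commutation lemma affects your additivity step: splitting the commutator word for $[X+X',Y]$ into the words for $[X,Y]$ and $[X',Y]$ requires moving infinitesimal transformations of \emph{distinct} fields past one another up to error terms that must be shown to die under the factorization, and Proposition \ref{t4.2} plus Corollary \ref{t4.4'} alone do not provide this. Finally, for the Jacobi identity, naming the Hall--Witt identity is the right instinct, but the hard part is precisely what you wave away: the Hall--Witt word involves \emph{conjugated} commutators, and showing that the conjugations are invisible at the order detected by $d_{1}d_{2}d_{3}$ is a substantial piece of infinitesimal analysis --- substantial enough that the author devotes the entire paper \cite{nishi3} to it. So the proposal, while structurally sensible, is missing the key lemmas that make the alternating law, bilinearity, and the Jacobi identity actually go through.
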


\begin{proof}
The proof could be obtained by reformulating our proof of Proposition 16 above
and Sections 5 and 6 in \cite{nishi1}. The Jacobi identity is dealt with in
detail in \cite{nishi3}.
\end{proof}


\begin{thebibliography}{9}                                                                                                %
\bibitem {ko}Kock, Anders:Synthetic Differential Geometry, Second Edition,
Cambridge University Press, Cambridge, 2006.

\bibitem {lav}Lavendhomme, Ren\'{e}:Basic Concepts of Synthetic Differential
Geometry, Kluwer Academic Publishers, Dordrecht, 1996.

\bibitem {nishi1}Nishimura, Hirokazu:Differential geometry of microlinear
Fr\"{o}licher spaces I, International Journal of Pure and Applied Mathematics,
\textbf{64} (2010), 43-83.

\bibitem {nishi2}Nishimura, Hirokazu:Axiomatic differential geometry I
-towards model categories of differential geometry-, arXiv 1203.3911.

\bibitem {nishi3}Nishimura, Hirokazu:Axiomatic differential geometry II-3 -its
developments- chapter 3:the general Jacobi identity, arXiv 1208.1894.
\end{thebibliography}
\end{document}